\theoremstyle{plain}
\newtheorem{theorem}{Theorem}[section]
\newtheorem{lemma}[theorem]{Lemma}
\newtheorem{remark}[theorem]{Remark}
\newtheorem{cor}[theorem]{Corollary}
\newtheorem{prop}[theorem]{Proposition}
\theoremstyle{definition}
\newtheorem{definition}[theorem]{Definition}
\newtheorem{example}[theorem]{Example}
\newcounter{stp}
\DeclareMathOperator{\real}{Re}
\title{More than one Author with different Affiliations}
\author[1]{Martin Adler\thanks{maad@fa.uni-tuebingen.de}}
\author[1]{Waed Dada\thanks{wada@fa.uni-tuebingen.de}}
\author[2]{Agnes Radl \thanks{agnes.radl@math.uni-leipzig.de}}
\affil[1]{Mathematisches Institut, Universität Tübingen\\
 Auf der Morgenstelle 10\\ D-72076 Tübingen, Germany}
\affil[2]{Mathematisches Institut, Universität Leipzig\\Augustusplatz 10, 04109 Leipzig, Germany}
\begin{document}

\title{A semigroup approach to the numerical range of operators on Banach spaces}

\date{}
\maketitle


{\bf \large Abstract.}
We introduce the {\em numerical spectrum} $\sigma_n(A)\subseteq\mathbb{C}$ of an (unbounded) linear operator $A$ on a Banach space $X$ and study its properties. 
Our definition is closely related to the numerical range $W(A)$ of $A$ and always yields a superset of $W(A)$. 
In the case of bounded operators on Hilbert spaces, the two notions coincide. 
However, unlike the numerical range, $\sigma_n(A)$ is always closed, convex and contains the spectrum of $A$. 
In the paper we strongly emphasise the connection of our approach to the theory of $C_0$-semigroups. 
\\

\vspace{0.5cm}
{\bf Mathematics Subject Classification (2010). }$	47A12 $, $47A10$, $	47D06$.\\
\vspace{0.2cm}

{\bf Keywords.}  Numerical range,  spectrum, $C_0$-semigroup.
\section{Introduction}

The spectrum $\sigma(A)$ is the most important invariant of a 
linear operator $A$ on a Banach space X. It is a closed set of complex 
numbers and is invariant under similarity of operators. However  it is, in some 
sense, a very weak invariant since, e.g., $\sigma(A) = \{0\}$ does not 
imply $A = 0$.

Already in 1918, O. Toeplitz  \cite{to18} and F. Hausdorff \cite{ha19}  introduced a stronger invariant for  $A$ acting on the finite dimensional space $X = \mathbb C^n$ with Euclidean norm $||\cdot||$ induced by the standard inner product $< \cdot, \cdot > $ as
$$W(A) := \{< Ax,x> : x\in X, ||x||=1 \},$$
calling it {\em  Wertevorrat einer Bilinearform}, later  {\em numerical range } of $A$.
This set is invariant  only under unitary equivalence and has some very nice 
properties such as
\begin{enumerate}
\item $ W(A)$ contains $\sigma(A)$,
\item  $W(A)$ is closed,
\item  $W(A)$ is convex,
\item $W(A) = \{0\}$ if and only if $A= 0$. 
\end{enumerate}
In the following,  M. H. Stone \cite{st32}  extended this concept to bounded operators on arbitrary Hilbert 
spaces by the same formula. This numerical range remains a unitary 
invariant, but loses properties (1) and (2) while (3) and (4) still hold. \\
Later,  J. R. Giles  and G. Joseph \cite{gj74} generalized the numerical range to unbounded operators on Hilbert spaces. In this case one loses the properties (1) and (2) in an even more drastic way, see  \cite{bo75} and \cite{cj76}. \\
Due to the lack of an inner product, it was not so immediate how to 
define a numerical range for operators on Banach spaces. This 
was achieved by Lumer in 1961 \cite{lg61} and Bauer in 1962 \cite{bf62}. However, by this generalization one loses  all the properties (1), (2), and (3), while the numerical range remains  an isometric invariant.

Our goal in this paper is twofold. First, we modify the definition of the numerical range in order to 
obtain a set of complex numbers, called \emph{numerical spectrum} of a linear 
operator on a Banach space, being an isometric invariant and satisfying all the properties 
(1),(2),(3) and (4). 

Second, we show how this set characterizes semigroups $(T(t))_{t\ge0}$  satisfying 
an estimate of the form
$||T(t)|| \le M e^{t\omega} \;\mbox{for} \; t \ge0\;\mbox{  for some}\; \omega\in \mathbb R\; \mbox{but with}\; M = 1.$\\
Such {\em quasi-contractive semigroups} (more precisely, $\omega$-contractive) are important in many applications, 
e.g., in order to obtain stability in the Trotter product formula (see \cite[Cor. III.5.8]{en00}). \\
We now recall here the  basic  tools for our approach. For an operator $A$ with domain $D(A)$ on a Banach space and some fixed $\theta\in[0, 2\pi)$, we  consider the ``rotated'' operator $A_{\theta}:= e^{-i\theta}A$ with $D(A_{\theta})=D(A)$. If this rotated operator $A_{\theta}$  generates a $C_0$-semigroup, we denote it  by   $(T_{\theta}(t))_{t\ge0}$. Analogously, for $\omega\in \mathbb R$ we rotate the half plane $\mathbb C_{\omega}:=\{\lambda\in \mathbb C: \mbox{Re} \; \lambda>\omega\}$,  by an angle of $\theta$ and obtain the rotated half plane as 
 $$H_{\theta, \omega}:= e^{i\theta}\cdot \mathbb C_{\omega}= \{e^{i\theta}\cdot \lambda: \mbox{Re}\; \lambda >\omega\}.$$

 We define the distance between $\lambda\in H_{\theta, \omega}$ and $\mu \in \partial H_{\theta,\omega}$ as
$$d(\lambda, \partial H_{\theta,\omega}):= \inf\{|\lambda-\mu|: \mu\in \partial H_{\theta,\omega}\}.$$
Combining the Hille-Yosida Theorem (\cite[Theorem II.3.5]{en00}) with the Lumer-Phillips Theorem (\cite[Theorem II.3.15]{en00} we obtain a series of  equivalences.

\begin{theorem} \label{goodeq11}
For  a closed and densely defined operator $(A,D(A))$ on a Banach space $X$ and some $\theta\in [0,2\pi)$, $\omega\in \mathbb R$, the following statements are equivalent.
\begin{enumerate}
\item $(A_{\theta},D(A_{\theta}))$ generates an $\omega$-contractive $C_0$-semigroup $(T_{\theta}(t))_{t\ge0}$.
 \item $e^{i\theta}(\omega, +\infty)\subseteq \rho(A)$ and $\|R(e^{i \theta}\lambda, A)\|\le \frac{1}{ \lambda-\omega}$ for all $\lambda >\omega$.

\item  $H_{\theta, \omega}\subseteq \rho(A)$  and $|| R( \lambda, A)||\le \frac{1}{d( \lambda , \partial H_{\theta,\omega})}$ for all  $\lambda \in H_{\theta, \omega}$.
\item  $||(e^{i\theta} \lambda -A)x||\ge (\lambda-\omega) ||x||$ for all  $ \lambda >\omega,\; x\in D(A)$, and $(e^{i\theta}\lambda-A)$ is surjective for some (hence all) $\lambda>\omega$.

\item For each $x\in D(A)$ there exists $j(x) \in \mathfrak J(x)$ \footnote[1]{For every $x\in X$, the {\em duality set} $\mathfrak J(x)$ is defined as \begin{center}$\mathfrak J(x):=\{j(x) \in X^\prime: \langle x,j(x)\rangle =||x||^2=||j(x)||^2\}$, see \cite{en00}, p.~87.\end{center}} such that $$\mbox{Re}\;e^{-i\theta}\langle Ax,j(x)\rangle \le \omega||x||^2$$ and $(e^{i\theta}\lambda-A)$ is surjective  for some (hence all) $\lambda>\omega$.

\end{enumerate}
\end{theorem}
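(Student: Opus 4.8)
The plan is to reduce all five statements to a single assertion about contraction semigroups and then read off Hille--Yosida and Lumer--Phillips. Set $B:=A_{\theta}-\omega$ with $D(B)=D(A)$. Since $e^{-i\theta}$ is an isometric isomorphism of $X$ and a scalar shift changes neither the domain nor closedness, $A$ is closed and densely defined iff $B$ is, and one has the elementary dictionary
$$\lambda-B=(\lambda+\omega)-A_{\theta}=e^{-i\theta}\bigl(e^{i\theta}(\lambda+\omega)-A\bigr),$$
whence $\lambda\in\rho(B)\Leftrightarrow e^{i\theta}(\lambda+\omega)\in\rho(A)$, with $\|R(\lambda,B)\|=\|R(e^{i\theta}(\lambda+\omega),A)\|$, and $(\lambda-B)$ surjective $\Leftrightarrow(e^{i\theta}(\lambda+\omega)-A)$ surjective. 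Moreover $(T_{\theta}(t))_{t\ge0}$ is $\omega$-contractive iff $(e^{-\omega t}T_{\theta}(t))_{t\ge0}$ is a $C_0$-semigroup of contractions, and the latter is generated by $B$. Thus statement (1) is equivalent to ``$B$ generates a contraction semigroup'', and the whole proof is the translation of the standard characterisations of that property, together with the geometric remark that for $\mu\in\mathbb C$ with $\mbox{Re}\,\mu>\omega$ one has $e^{i\theta}\mu\in H_{\theta,\omega}$ and, rotations being isometries, $d(e^{i\theta}\mu,\partial H_{\theta,\omega})=\mbox{Re}\,\mu-\omega$.

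For the block (1)$\Leftrightarrow$(2) and (1)$\Rightarrow$(3)$\Rightarrow$(2), I would use the Hille--Yosida theorem in the contractive form (\cite[Thm.~II.3.5]{en00}): $B$ generates a contraction semigroup iff $B$ is closed, densely defined, $(0,+\infty)\subseteq\rho(B)$ and $\|R(\lambda,B)\|\le 1/\lambda$ for all $\lambda>0$. Substituting $\lambda=\mu-\omega$ and applying the dictionary turns this into (2). Assuming (1), the Laplace-transform representation $R(\lambda,B)=\int_0^{\infty}e^{-\lambda t}T(t)\,dt$, valid for $\mbox{Re}\,\lambda>0$, gives $\{\mbox{Re}\,\lambda>0\}\subseteq\rho(B)$ with $\|R(\lambda,B)\|\le 1/\mbox{Re}\,\lambda$; rewriting $\lambda=\mu-\omega$ and inserting the distance identity above yields (3). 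Finally (3)$\Rightarrow$(2) is merely the restriction to real $\lambda$, so these three statements are equivalent.

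For the block (1)$\Leftrightarrow$(5)$\Leftrightarrow$(4), I would invoke the Lumer--Phillips theorem (\cite[Thm.~II.3.15]{en00}): a densely defined $B$ generates a contraction semigroup iff $B$ is dissipative --- for each $x\in D(B)$ there is $j(x)\in\mathfrak J(x)$ with $\mbox{Re}\,\langle Bx,j(x)\rangle\le 0$ --- and $(\lambda_0-B)$ is surjective for some (equivalently, all) $\lambda_0>0$. Since $\langle Bx,j(x)\rangle=e^{-i\theta}\langle Ax,j(x)\rangle-\omega\|x\|^2$, the dissipativity inequality becomes $\mbox{Re}\,e^{-i\theta}\langle Ax,j(x)\rangle\le\omega\|x\|^2$ and the range condition becomes surjectivity of $(e^{i\theta}\lambda-A)$ for some (hence all) $\lambda>\omega$; this is exactly (5). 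To pass to (4) I would use the standard reformulation of dissipativity (\cite[Prop.~II.3.23]{en00}): $B$ is dissipative iff $\|(\lambda-B)x\|\ge\lambda\|x\|$ for all $\lambda>0$, $x\in D(B)$; through the dictionary $\|(\lambda-B)x\|=\|(e^{i\theta}(\lambda+\omega)-A)x\|$ this is precisely the inequality in (4), while the surjectivity clauses of (4) and (5) coincide verbatim.

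I do not expect a genuine obstacle: the delicate points are purely organisational --- (a) checking that the rotation-and-shift dictionary respects resolvent set, resolvent norm, surjectivity and closedness simultaneously; (b) the elementary identity $d(e^{i\theta}\mu,\partial H_{\theta,\omega})=\mbox{Re}\,\mu-\omega$, which is what upgrades the family of estimates on the ray $e^{i\theta}(\omega,+\infty)$ in (2) to the full half-plane estimate in (3); and (c) making sure the ``for some (hence all)'' clauses in (4) and (5) are read off from the Lumer--Phillips statement rather than reproved from scratch.
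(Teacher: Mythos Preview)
Your proposal is correct and follows precisely the approach the paper itself indicates: the paper gives no detailed proof of this theorem but simply states that the equivalences are obtained by combining the Hille--Yosida Theorem \cite[Thm.~II.3.5]{en00} with the Lumer--Phillips Theorem \cite[Thm.~II.3.15]{en00}. Your reduction to the operator $B=A_{\theta}-\omega$ together with the distance identity $d(e^{i\theta}\mu,\partial H_{\theta,\omega})=\mbox{Re}\,\mu-\omega$ supplies exactly the missing details.
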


\section{The numerical spectrum}

We now use the characterizations of Theorem \ref{goodeq11} in order to define first the {\em numerical resolvent set} and then the {\em numerical spectrum} of a given operator.

\begin{definition}\label{7de}
Let $(A, D(A))$ be a closed and densely defined operator on a Banach space X. Then the {\em numerical resolvent set} of A is 
\begin{eqnarray*}\rho_{n}(A) :=\bigcup \big \{H_{\theta, \omega}: &H_{\theta, \omega}\subseteq \rho(A)&  \mbox{is an open half plane such that } \\  &&\|R(\lambda, A)\|\le \frac{1}{d(\lambda,\partial H_{\theta,\omega})} \;\; \forall \lambda\in H_{\theta, \omega} \big \}. \end{eqnarray*}
The  complementary set 
$$\sigma_n(A):= \mathbb C\backslash \rho_n(A)$$
is called the {\em  numerical spectrum} of A.
%
\end{definition}

\begin{remark}\label{re}
\begin{enumerate}
\item
As a first observation we note that the  open half plane $H_{\theta, \omega}$ in the above definition belongs to $\rho_n(A)$. Therefore, $\rho_n(A)$ is empty or a union of open half planes, hence open, while $\sigma_n(A)$ is $\mathbb C$ or an  intersection of the  closed half planes $\mathbb C\backslash H_{\theta, \omega }$, hence closed and convex. 
In particular, the numerical spectrum $\sigma_n(A)$ is never  empty.

\item   Using the statements (1) and (3) of Theorem \ref{goodeq11}, we see that $\lambda \in \rho_n(A)$ if and only if there exists $ (\theta, \omega )$ such that $ \lambda \in H_{\theta, \omega} $ and $ A_{\theta} $ generates an $\omega$-contractive $C_{0}$-semigroup. Therefore, $\rho_n(A)$, and hence $\sigma_n(A)$, is characterized by a generator property of the rotated operators $A_{\theta}$.
 
\end{enumerate}
\end{remark}

The numerical spectrum $\sigma_n(A)$  is closely related to the spectrum and the so-called {\em numerical range}  of A. 
Before discussing this relationship,  we state some simple properties of the numerical spectrum.

\begin{prop} \label{equa} Let $(A, D(A))$  be a closed and  densely defined operator on a Banach space $X$. Then we have the following. 
\begin{enumerate}
\item $\sigma_n(A)$ is closed and convex.

\item  $\sigma_n (\alpha A+\beta)= \alpha \sigma_n(A)+\beta$ for all complex numbers $\alpha$ and $\beta$.
\item $\sigma_n(A)= \sigma_n(U^{-1}AU)$ for all isometric isomorphisms $U$ on $X$. 

\end{enumerate} 
\end{prop}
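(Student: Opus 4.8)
The plan is to prove the three statements in order, relying throughout on the characterization in Remark \ref{re}(2): $\lambda\in\rho_n(A)$ if and only if there exist $\theta\in[0,2\pi)$ and $\omega\in\mathbb R$ with $\lambda\in H_{\theta,\omega}$ and $A_\theta=e^{-i\theta}A$ generating an $\omega$-contractive $C_0$-semigroup. Statement (1) is immediate from Remark \ref{re}(1), so I would simply point to that observation (each admissible $H_{\theta,\omega}$ lies in $\rho_n(A)$, so $\sigma_n(A)$ is an intersection of closed half planes).

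For statement (2), I would first dispose of the cases $\alpha=0$ (where both sides are $\{\beta\}$, using that $\sigma_n$ of the zero operator is all of $\mathbb C$—wait, no: for $\alpha=0$ the operator $\alpha A+\beta=\beta$ is bounded with $\sigma_n=\{\beta\}$, which equals $\alpha\sigma_n(A)+\beta$ only if one interprets $0\cdot\sigma_n(A)=\{0\}$; I would state this convention or restrict to $\alpha\neq0$). For $\alpha\neq0$ write $\alpha=re^{i\phi}$ with $r>0$. The key computation is that translating and rotating/scaling $A$ translates and rotates/scales every admissible half plane: if $H_{\theta,\omega}\subseteq\rho(A)$ witnesses $\lambda\in\rho_n(A)$, then checking the resolvent bound shows $\alpha H_{\theta,\omega}+\beta$ (which is again an open half plane, namely $H_{\theta+\phi,\,r\omega+\cdots}$ after recentering—the precise parameters follow from $d(\alpha\mu+\beta,\partial(\alpha H+\beta))=r\,d(\mu,\partial H)$ and $R(\alpha\mu+\beta,\alpha A+\beta)=\alpha^{-1}R(\mu,A)$) witnesses $\alpha\lambda+\beta\in\rho_n(\alpha A+\beta)$. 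This gives $\alpha\rho_n(A)+\beta\subseteq\rho_n(\alpha A+\beta)$; applying the same to the inverse affine map $\mu\mapsto\alpha^{-1}(\mu-\beta)$ and the operator $\alpha A+\beta$ gives the reverse inclusion, hence equality of the resolvent sets and thus of the spectra. The main thing to get right here is bookkeeping the angle and the $\omega$-parameter of the image half plane.

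For statement (3), let $U$ be an isometric isomorphism on $X$. The point is that similarity by $U$ preserves every quantity appearing in Definition \ref{7de}: $\rho(U^{-1}AU)=\rho(A)$, and $R(\lambda,U^{-1}AU)=U^{-1}R(\lambda,A)U$, so $\|R(\lambda,U^{-1}AU)\|=\|R(\lambda,A)\|$ because $U$ is isometric; the half planes $H_{\theta,\omega}$ and the distances $d(\lambda,\partial H_{\theta,\omega})$ involve only $\lambda$, not the operator. Hence a half plane $H_{\theta,\omega}$ is admissible for $A$ if and only if it is admissible for $U^{-1}AU$, so the two unions defining $\rho_n$ coincide. (Alternatively, invoke Remark \ref{re}(2): $A_\theta$ generates an $\omega$-contractive semigroup iff $(U^{-1}AU)_\theta=U^{-1}A_\theta U$ does, since $U$ isometric implies $U^{-1}T_\theta(t)U$ has the same norm as $T_\theta(t)$.)

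I do not expect a genuine obstacle in any of this; the only mild care needed is the degenerate case $\alpha=0$ in (2) and keeping the half-plane parameters consistent under the affine transformation—everything else is a direct transport of the defining conditions along the relevant (isometric or affine) change of variables.
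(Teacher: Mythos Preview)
Your proposal is correct and follows essentially the same route as the paper: part (1) via Remark \ref{re}(1), part (3) via $\rho(U^{-1}AU)=\rho(A)$ and $\|R(\lambda,U^{-1}AU)\|=\|U^{-1}R(\lambda,A)U\|=\|R(\lambda,A)\|$, and part (2) via the identities $R(\alpha\mu+\beta,\alpha A+\beta)=\alpha^{-1}R(\mu,A)$ and $d(\alpha\mu+\beta,\alpha\partial H+\beta)=|\alpha|\,d(\mu,\partial H)$. The only cosmetic difference is that for the reverse inclusion in (2) you invoke the inverse affine map instead of repeating the direct computation (as the paper does), and your worry about $\alpha=0$ is harmless: since $\sigma_n(A)\neq\emptyset$ one has $0\cdot\sigma_n(A)+\beta=\{\beta\}=\sigma_n(\beta I)$.
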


\begin{proof}
\begin{enumerate}
\item This has been noted in Remark \ref{re}.(1).

\item We show $\rho_n(\alpha A+\beta)=\alpha  \rho_n(A)+\beta$.  If $\alpha =0$, the assertion is clear. \\
$\supseteq$: If $\alpha \not= 0$ take $\lambda \in (\alpha  \rho_n(A)+\beta)$, or   $\alpha^{-1}(\lambda-\beta)\in \rho_n(A)$. Then there exists an open half plane $H_{\theta,\omega}$ such that $\alpha^{-1}(\lambda-\beta) \in H_{\theta,\omega} \subseteq \rho(A)$  and $$||R(\alpha^{-1}(\lambda-\beta), A)||\le \frac{1}{d(\alpha^{-1}(\lambda-\beta), \partial H_{\theta,\omega})}.$$
Therefore,
$$ |\alpha|\; ||R(\lambda, \alpha A+\beta)||= ||R(\alpha^{-1}(\lambda-\beta), A)||  \le \frac{1}{d(\alpha^{-1}(\lambda-\beta), \partial  H_{\theta,\omega})}.$$
Since
 $$\frac{1}{d(\alpha^{-1}(\lambda-\beta), \partial  H_{\theta,\omega})}=\frac{1}{|\alpha^{-1}| d(\lambda, \alpha \partial H_{\theta,\omega}+\beta)}= \frac{|\alpha|}{d(\lambda, \alpha \partial H_{\theta,\omega}+\beta)},$$
we obtain
$$||R(\lambda, \alpha A+\beta)|| \le\frac{1}{ d(\lambda, \alpha \partial  H_{\theta,\omega}+\beta)}.$$
Since  $\lambda \in (\alpha H_{\theta,\omega}+ \beta) \subseteq \rho(\alpha A+\beta)$, it  follows that  $\lambda \in \rho_n(\alpha A+\beta)$.\\
$\subseteq$: Let $\alpha \not= 0$ and $\lambda \in \rho_n(\alpha A+\beta)$. Then there exists an open half plane $H_{\theta,\omega}$ such that \\ $\lambda \in H_{\theta,\omega} \subseteq \rho(\alpha A+\beta)= \alpha \rho(A)+\beta$ and $||R(\lambda, \alpha A+\beta)||\le \frac{1}{d(\lambda, \partial  H_{\theta,\omega})}$.\\
Hence,
$$
\frac{1}{d(\lambda, \partial  H_{\theta,\omega})} \ge ||R(\lambda, \alpha A+\beta)||= |\alpha^{-1}|\cdot ||R(\alpha^{-1}(\lambda-\beta),  A)||.$$
From 
\begin{eqnarray*}
d(\lambda, \partial H_{\theta,\omega}) &=& \inf \{ | \lambda-z|: z\in \partial H_{\theta,\omega}\}\\
&=& \inf \{| \;\alpha \;[\alpha^{-1}(\lambda-\beta)-\alpha^{-1}(z-\beta)]\;| : z\in \partial H_{\theta,\omega}, \alpha, \beta \in \mathbb C \}\\
&=& |\alpha|\cdot d\big(\alpha^{-1}(\lambda-\beta),\alpha^{-1}(\partial H_{\theta,\omega}-\beta)\big)
\end{eqnarray*}
we obtain the equality 
$$\frac{1}{d(\lambda, \partial  H_{\theta,\omega})} =\frac{1}{|\alpha|\cdot d(\alpha^{-1}(\lambda-\beta), \alpha^{-1}(\partial  H_{\theta,\omega}-\beta))}.$$
Thus 
$$||R(\alpha^{-1}(\lambda-\beta),  A)||\le\frac{1}{ d\big(\alpha^{-1}(\lambda-\beta), \alpha^{-1}(\partial H_{\theta,\omega}-\beta)\big)} ,$$
and hence $\lambda \in (\alpha \rho_n(A)+ \beta)$.

\item The claim follows since $ \rho(U^{-1} A U)= \rho(A)$  and 
\begin{center}$||R(\lambda,  U^{-1} A U)||= ||U^{-1} R( \lambda , A)U|| = ||R(\lambda, A)||.$\end{center}
\end{enumerate}\vspace{-0.8cm}
\end{proof}
We now  show that $\sigma(A)\subseteq\sigma_n(A)$. Moreover, if $A$ is bounded, then $\sigma_n(A)$ is compact.
\begin{prop}\label{spectrum}
 For every  closed and densely defined operator  $(A, D(A))$ on a Banach space $X$, we have the following assertions.
\begin{enumerate}
 \item $\sigma(A)\subseteq \sigma_n(A)$. 
\item If $A$ is a bounded operator, then $\sigma_n(A)\subseteq \{\lambda\in \mathbb C: |\lambda|\le ||A||\}$.

\end{enumerate}
\end{prop}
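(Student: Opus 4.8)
The plan is to handle the two statements separately, in each case by arguing about the complementary set $\rho_n(A)$.

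Statement (1) is essentially a tautology given Definition \ref{7de}: the set $\rho_n(A)$ is by construction a union of open half planes $H_{\theta,\omega}$, and each half plane occurring in this union is required to satisfy $H_{\theta,\omega}\subseteq\rho(A)$. Hence $\rho_n(A)\subseteq\rho(A)$, and taking complements gives $\sigma(A)\subseteq\sigma_n(A)$. Equivalently, one may invoke Remark \ref{re}.(2): if $\lambda\in\rho_n(A)$ then $A_{\theta}$ generates a $C_0$-semigroup for a suitable $\theta$ and $\lambda\in H_{\theta,\omega}\subseteq\rho(A_{\theta})$, so that $\lambda\in e^{i\theta}\rho(A_{\theta})=\rho(A)$.

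For statement (2), assume $A$ is bounded and fix $\lambda_0$ with $|\lambda_0|>\|A\|$; it suffices to produce an open half plane that places $\lambda_0$ in $\rho_n(A)$. Writing $\lambda_0=e^{i\theta}|\lambda_0|$ with $\theta=\arg\lambda_0$ and putting $\omega:=\|A\|$, I would verify that $H_{\theta,\omega}$ meets the two requirements of Definition \ref{7de}. Every $z\in H_{\theta,\omega}$ has the form $z=e^{i\theta}\mu$ with $\mbox{Re}\,\mu>\omega$, hence $|z|=|\mu|\ge\mbox{Re}\,\mu>\|A\|$; boundedness of $A$ then gives $z\in\rho(A)$ together with the Neumann-series bound $\|R(z,A)\|\le(|z|-\|A\|)^{-1}$. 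In particular $\lambda_0\in H_{\theta,\omega}\subseteq\rho(A)$. Applying the rotation $e^{-i\theta}$, which carries $\partial H_{\theta,\omega}$ onto the line $\{\mbox{Re}=\omega\}$, one computes $d(z,\partial H_{\theta,\omega})=\mbox{Re}\,\mu-\omega\le|\mu|-\omega=|z|-\|A\|$, so that $\|R(z,A)\|\le(|z|-\|A\|)^{-1}\le d(z,\partial H_{\theta,\omega})^{-1}$ for all $z\in H_{\theta,\omega}$. Thus $H_{\theta,\omega}$ qualifies in the union defining $\rho_n(A)$, whence $\lambda_0\in\rho_n(A)$; as $\lambda_0$ was arbitrary, $\sigma_n(A)\subseteq\{\lambda\in\mathbb C:|\lambda|\le\|A\|\}$.

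An equivalent route for (2), closer to the semigroup theme of the paper, is to observe that for bounded $A$ and any $\theta$ the operator $A_{\theta}=e^{-i\theta}A$ generates the uniformly continuous semigroup $(e^{tA_{\theta}})_{t\ge0}$ with $\|e^{tA_{\theta}}\|\le e^{t\|A\|}$, i.e. a $\|A\|$-contractive semigroup, so that Remark \ref{re}.(2) gives $H_{\theta,\|A\|}\subseteq\rho_n(A)$, and the union over $\theta\in[0,2\pi)$ exhausts $\{\lambda\in\mathbb C:|\lambda|>\|A\|\}$. I do not anticipate a genuine obstacle: (1) is a one-line consequence of the definition, and in (2) the only computation of substance is the elementary estimate $d(z,\partial H_{\theta,\omega})\le|z|-\|A\|$, together with the observation that the half planes $H_{\theta,\|A\|}$, as $\theta$ varies, sweep out exactly the exterior of the closed disk of radius $\|A\|$.
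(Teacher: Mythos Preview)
Your proposal is correct and matches the paper's proof: part~(1) is argued identically via $\rho_n(A)\subseteq\rho(A)$, and your ``equivalent route'' for part~(2) (showing $\|e^{tA_\theta}\|\le e^{t\|A\|}$ so that each $A_\theta$ generates a $\|A\|$-contractive semigroup, then applying Remark~\ref{re}.(2)) is exactly what the paper does. Your first route for~(2), via the Neumann-series bound $\|R(z,A)\|\le(|z|-\|A\|)^{-1}$ and the distance estimate $d(z,\partial H_{\theta,\|A\|})\le|z|-\|A\|$, is a correct and slightly more direct alternative that bypasses the semigroup characterization but yields the same half planes.
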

\begin{proof}
\begin{enumerate}
\item By Definition \ref{7de}, it is clear that $\rho_n(A)\subseteq \rho(A)$, hence $\sigma(A)\subseteq \sigma_n(A) $.
\item Since $A_{\theta}$ is a bounded operator on $X$ with $||A_{\theta}||= ||A||$ for all $\theta\in[0,2\pi)$, we have $$||T_{\theta}(t)||=\left\| \sum_{n=0}^{\infty} \frac{t^n A_{\theta}^n}{n!}\right\|\le \sum_{n=0}^{\infty} \frac{t^n ||A||^n}{n!} = e^{t||A||}.$$ Thus for each $\theta\in [0,2\pi)$, $A_{\theta}$ generates a $||A||$-contractive semigroup.  The  assertion now follows from Remark \ref{re}.(2).
\end{enumerate}\vspace{-0.5cm}
\end{proof}

The following theorem relates our numerical spectrum of an operator $(A,D(A))$ to the numerical range  defined as
$$W(A):=\{\langle Ax, j(x)\rangle : x\in D(A), ||x||=1, j(x)\in \mathfrak J(x)\},$$
and studied, e.g., in  \cite{bo73}, \cite{bo75}. 
Here, and in the sequel, $\overline {co} M$ denotes the closure of the convex hull of the set $M$. 

\begin{theorem}\label{union} Let $(A, D(A))$ be a closed and densely defined operator on a Banach space $X$. Then we have 
\begin{center}
$\sigma_n(A)= \overline{co}\{\langle Ax,j(x)\rangle: x\in D(A), \|x\|=1,  j(x)\in \mathfrak J(x)\} \cup \sigma_r(A)$ \footnote[1]{We call $\sigma_r(A):=\{\lambda\in \mathbb C: rg(\lambda-A)$ is not dense in $X$\} the {\em residual spectrum} of $A$. It coincides  with the point spectrum  $\sigma_p(A^\prime)$ of the adjoint $A'$ of $A$, see \cite[Prop.~IV.1.12]{en00}. }.
\end{center}
\end{theorem}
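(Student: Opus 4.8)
The plan is to establish the two inclusions separately, writing $W:=W(A)$, $\sigma_r:=\sigma_r(A)$, and using throughout that $\sigma_n(A)=\mathbb C\setminus\rho_n(A)$ is the intersection of the closed half planes $\mathbb C\setminus H_{\theta,\omega}$ taken over all open half planes $H_{\theta,\omega}$ admissible in Definition~\ref{7de} (closed resolvent in $\rho(A)$ with the displayed bound).

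\emph{The inclusion $\overline{co}\,W\cup\sigma_r\subseteq\sigma_n(A)$.} Since $\sigma_r\subseteq\sigma(A)\subseteq\sigma_n(A)$ by Proposition~\ref{spectrum}.(1), and since $\sigma_n(A)$ is closed and convex by Proposition~\ref{equa}.(1), it suffices to show $W\subseteq\sigma_n(A)$, i.e. that $W$ avoids every admissible half plane. Fixing such an $H_{\theta,\omega}$, Theorem~\ref{goodeq11} ((3)$\Rightarrow$(1)) gives that $A_\theta$ generates an $\omega$-contractive semigroup; hence $e^{-\omega t}T_\theta(t)$ is a contraction semigroup with generator $A_\theta-\omega$, and a standard dissipativity computation (estimate $\operatorname{Re}\langle e^{-\omega t}T_\theta(t)x-x,\,j(x)\rangle\le\|e^{-\omega t}T_\theta(t)x\|\,\|j(x)\|-\|x\|^2\le 0$ and let $t\to0^+$) yields $\operatorname{Re}\,e^{-i\theta}\langle Ax,j(x)\rangle\le\omega\|x\|^2$ for \emph{every} $x\in D(A)$ and \emph{every} $j(x)\in\mathfrak J(x)$. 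This is genuinely stronger than statement~(5) of Theorem~\ref{goodeq11}, which only provides one suitable $j(x)$, and it is exactly what is needed to control the whole set $W$. Taking $\|x\|=1$ we get $\operatorname{Re}(e^{-i\theta}z)\le\omega$ for all $z\in W$, so $W\cap H_{\theta,\omega}=\emptyset$; intersecting over all admissible half planes gives $W\subseteq\sigma_n(A)$, hence $\overline{co}\,W\subseteq\sigma_n(A)$.

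\emph{The inclusion $\sigma_n(A)\subseteq\overline{co}\,W\cup\sigma_r$.} Equivalently, $\mathbb C\setminus(\overline{co}\,W\cup\sigma_r)\subseteq\rho_n(A)$. Let $\mu$ lie in this set; we may assume $\overline{co}\,W$ is a nonempty proper closed convex subset of $\mathbb C$ (if $X=\{0\}$ everything is trivial; if $\overline{co}\,W=\mathbb C$ there is nothing to prove). By Hahn--Banach separation of $\mu$ from $\overline{co}\,W$ (e.g. through the nearest-point projection of $\mu$) there are $\theta\in[0,2\pi)$ and $\omega\in\mathbb R$ with $\operatorname{Re}(e^{-i\theta}z)\le\omega$ on $\overline{co}\,W$ and $\operatorname{Re}(e^{-i\theta}\mu)>\omega$; thus $\mu\in H_{\theta,\omega}$ and, by homogeneity, $\operatorname{Re}\,e^{-i\theta}\langle Ax,j(x)\rangle\le\omega\|x\|^2$ for all $x\in D(A)$ and all $j(x)\in\mathfrak J(x)$. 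Pairing $(\nu-A)x$ with $j(x)$ and taking real parts (as in the passage from (5) to (4) in Theorem~\ref{goodeq11}) then gives
\[
 \|(\nu-A)x\|\ \ge\ d(\nu,\partial H_{\theta,\omega})\,\|x\|\qquad(\nu\in H_{\theta,\omega},\ x\in D(A)),
\]
so each $\nu-A$ with $\nu\in H_{\theta,\omega}$ is injective with closed range (using that $A$ is closed), and satisfies $\|R(\nu,A)\|\le d(\nu,\partial H_{\theta,\omega})^{-1}$ whenever it is invertible.

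The remaining --- and, I expect, principal --- difficulty is to upgrade this to $H_{\theta,\omega}\subseteq\rho(A)$, and it is here that the hypothesis $\mu\notin\sigma_r$ is used (exactly once): $rg(\mu-A)$ is dense and closed, hence all of $X$, so $\mu\in\rho(A)$. Now $H_{\theta,\omega}\cap\rho(A)$ is open in $H_{\theta,\omega}$, and it is also closed there: if $\nu_n\in H_{\theta,\omega}\cap\rho(A)$ converge to $\nu_\ast\in H_{\theta,\omega}$, then the standard inequality $\operatorname{dist}(\nu_n,\sigma(A))\ge\|R(\nu_n,A)\|^{-1}\ge d(\nu_n,\partial H_{\theta,\omega})\to d(\nu_\ast,\partial H_{\theta,\omega})>0$ forces $\nu_\ast\in\rho(A)$. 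Connectedness of $H_{\theta,\omega}$ together with $\mu\in H_{\theta,\omega}\cap\rho(A)$ then yields $H_{\theta,\omega}\subseteq\rho(A)$, with $\|R(\nu,A)\|\le d(\nu,\partial H_{\theta,\omega})^{-1}$ throughout; hence $H_{\theta,\omega}$ is admissible in Definition~\ref{7de} and $\mu\in H_{\theta,\omega}\subseteq\rho_n(A)$. The point to emphasise is that the separation argument alone only controls the ``angular'' behaviour of $\nu-A$ on the half plane (injectivity and closed range), and producing genuine resolvent points requires its interplay with the residual-spectrum assumption and the connectedness step --- which is the structural reason why $\sigma_r(A)$, and not merely $\overline{co}\,W(A)$, appears in the statement.
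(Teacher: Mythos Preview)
Your proof is correct and follows the same two-inclusion skeleton as the paper: for $\supseteq$ use $\sigma_r\subseteq\sigma\subseteq\sigma_n$ plus closed-convexity, and show $W$ avoids every admissible half plane via dissipativity of the generator of a contraction semigroup; for $\subseteq$ separate $\mu$ from $\overline{co}\,W$ by a half plane, deduce dissipativity of $A_\theta-\omega$, and use $\mu\notin\sigma_r$ to obtain surjectivity.

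The one methodological difference worth noting is in how the $\subseteq$ direction is concluded. The paper simply invokes Theorem~\ref{goodeq11} ((5)$\Rightarrow$(1)) to say that $A_\theta$ generates an $\omega$-contractive semigroup, and then Remark~\ref{re}.(2) gives $\mu\in\rho_n(A)$. You instead bypass semigroup generation entirely and verify Definition~\ref{7de} by hand: from the dissipativity estimate you get injectivity with closed range on $H_{\theta,\omega}$, use $\mu\notin\sigma_r$ to place one point in $\rho(A)$, and then run an open--closed connectedness argument on $H_{\theta,\omega}\cap\rho(A)$ (via $\mathrm{dist}(\nu,\sigma(A))\ge\|R(\nu,A)\|^{-1}$) to get the whole half plane into $\rho(A)$ with the required bound. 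This is more self-contained, but it is in effect a direct proof of the implication (5)$\Rightarrow$(3) of Theorem~\ref{goodeq11}; the paper's route is shorter precisely because that implication has already been packaged. Your explicit remark that the $\supseteq$ direction needs dissipativity for \emph{every} $j(x)\in\mathfrak J(x)$ (stronger than (5) of Theorem~\ref{goodeq11}) is a point the paper leaves implicit.
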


\begin{proof} 
$\supseteq$ :  Since $\sigma_n(A)$ is closed and convex, it suffices to take $$\lambda\in \{\langle Ax,j(x)\rangle: x\in D(A), \|x\|=1,  j(x)\in \mathfrak J(x)\} \cup \sigma_r(A).$$
If  $\lambda\in \sigma_r(A)$, then $\lambda \in \sigma_n(A)$, since $\sigma_r(A)\subseteq \sigma(A)\subseteq \sigma_n(A)$.  If  $\lambda = \langle Ax,j(x)\rangle$ for some $ x\in D(A), \|x\|=1,  j(x)\in \mathfrak J(x)$.

Let $\sigma_n(A) \subseteq  H_{\theta,\omega} $ be a closed half plane.  
It follows that    $e^{-i\theta}A-\omega$ is the generator of a contractive $C_0$-semigroup. Hence $$\mbox{Re}\; e^{-i\theta}\langle Ax, j(x)\rangle \le \omega\; \mbox{ for all}\;  x\in D(A),\; ||x||=1,\; j(x)\in \mathfrak J(x).$$
Now 
$$\mbox{Re}\; e^{-i\theta} \lambda= \mbox{Re} \;e^{-i\theta}  \langle Ax, j(x)\rangle \le \omega.$$
Therefore  $\lambda\in \sigma_n(A)$. \\
$\subseteq$: Take $\lambda \notin \overline{co}\{\langle Ax,j(x)\rangle: x\in D(A), \|x\|=1,  j(x)\in \mathfrak J(x)\}$ and $\lambda \notin \sigma_r(A)$. There exists an open half plane $H_{\theta,\omega}$ such that $\lambda \in H_{\theta,\omega}$ and  $$\{\langle Ax,j(x)\rangle: x\in D(A), \|x\|=1,  j(x)\in \mathfrak J(x)\} \subset \mathbb C\backslash H_{\theta,\omega}.$$ Rotating  yields
$$\mbox{Re}\; \langle e^{-i\theta}Ax,j(x)\rangle \le \omega \;\; \mbox{for all} \;x\in D(A), ||x||=1, j(x)\in \mathfrak J(x), $$
i.e., $e^{-i\theta}A -\omega$ is dissipative. Since $\lambda \notin \sigma_r(A)$, then  $e^{-i\theta} \lambda-\omega\notin\sigma_r(A_{\theta}-\omega)$ and  $A_{\theta}$ is surjective for some (hence all) $\lambda>\omega$.  By Theorem \ref{goodeq11}, $A_{\theta}$ is the generator of an $\omega$-contraction semigroup and by Remark \ref{re}.(2) one has $\lambda \in \rho_n(A)$. 
\end{proof}
\begin{remark} 
Combining the above theorem with Proposition \ref{equa}(1) it follows that the union 
$$\overline{co}\{\langle Ax,j(x)\rangle: x\in D(A), \|x\|=1,  j(x)\in \mathfrak J(x)\} \cup \sigma_r(A)$$ is automatically a closed and convex set. 
\end{remark}

We now list some cases in which $\sigma_n(A)$ takes a simpler form. 
For example, for  a bounded operator $A$ we do not need $\sigma_r(A)$ in the above characterization of $\sigma_n(A)$.

\begin{cor}\label{closure1}
For a bounded operator  $A$ on a Banach space X, we have 
$$ \sigma_n(A)= \overline{co}\{\langle Ax,j(x)\rangle:  x\in X, \|x\|=1,  j(x)\in \mathfrak J(x)\}.$$
\end{cor}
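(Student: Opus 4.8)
The plan is to derive the Corollary from Theorem \ref{union} by showing that, in the bounded case, the term $\sigma_r(A)$ is already swallowed by $\overline{co}\,W(A)$. Since $A$ is bounded we have $D(A)=X$, so Theorem \ref{union} reads $\sigma_n(A)=\overline{co}\,W(A)\cup\sigma_r(A)$, and the inclusion $\overline{co}\,W(A)\subseteq\sigma_n(A)$ is built in. Hence it suffices to prove $\sigma_r(A)\subseteq\overline{co}\,W(A)$, and because $\sigma_r(A)\subseteq\sigma(A)$ it is in fact enough to establish the classical inclusion $\sigma(A)\subseteq\overline{co}\,W(A)$ for bounded $A$.

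I would prove the latter by contraposition. Fix $\lambda_0\in\mathbb C\setminus\overline{co}\,W(A)$. As $\overline{co}\,W(A)$ is closed and convex, the Hahn--Banach separation theorem in $\mathbb C\cong\mathbb R^2$ yields $\theta\in[0,2\pi)$ and $\omega\in\mathbb R$ with $\lambda_0\in H_{\theta,\omega}$ and $\overline{co}\,W(A)\subseteq\mathbb C\setminus H_{\theta,\omega}$. Unravelling the definitions of $H_{\theta,\omega}$ and of $W(A)$, and using the homogeneity $\mathfrak J(\alpha x)=\alpha\,\mathfrak J(x)$ for $\alpha>0$ to pass from unit vectors to all of $X$, this is precisely the estimate
$$\mbox{Re}\;e^{-i\theta}\langle Ax,j(x)\rangle\le\omega\|x\|^2\qquad\text{for all }x\in X,\ j(x)\in\mathfrak J(x).$$
In particular the dissipativity condition in statement (5) of Theorem \ref{goodeq11} holds for $(A,\theta,\omega)$; moreover the surjectivity requirement there is free, since boundedness of $A$ gives $e^{i\theta}\lambda\in\rho(A)$ whenever $|\lambda|>\|A\|$, so one only needs to choose $\lambda>\max(\omega,\|A\|)$. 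Thus Theorem \ref{goodeq11} applies and $A_\theta$ generates an $\omega$-contractive $C_0$-semigroup; by Remark \ref{re}.(2) this forces $H_{\theta,\omega}\subseteq\rho_n(A)\subseteq\rho(A)$, so in particular $\lambda_0\in\rho(A)$. This proves $\sigma(A)\subseteq\overline{co}\,W(A)$, and combining with the first paragraph gives $\sigma_n(A)=\overline{co}\,W(A)$.

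I do not anticipate a genuine obstacle: the argument is essentially bookkeeping layered on Theorems \ref{goodeq11} and \ref{union}. The only steps demanding a little care are the translation of the geometric separation into the analytic inequality above — getting the correct $\theta,\omega$ and recording it in the normalisation-free form valid on all of $X$ — and the observation, which is exactly what makes $\sigma_r(A)$ disappear in the bounded case, that surjectivity of $e^{i\theta}\lambda-A$, the one nontrivial hypothesis in Theorem \ref{goodeq11}(5), costs nothing for a bounded operator.
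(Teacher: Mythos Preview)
Your proof is correct and uses the same core idea as the paper: separate $\lambda_0\notin\overline{co}\,W(A)$ from $\overline{co}\,W(A)$ by a half plane $H_{\theta,\omega}$, read off dissipativity of $A_\theta-\omega$, invoke boundedness for the surjectivity hypothesis in Theorem~\ref{goodeq11}(5), and conclude via Remark~\ref{re}.(2).

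The only difference is organisational. The paper argues the inclusion $\subseteq$ directly: from the half-plane argument it concludes $\lambda_0\in\rho_n(A)$ and is done. You instead first invoke Theorem~\ref{union} to write $\sigma_n(A)=\overline{co}\,W(A)\cup\sigma_r(A)$ and then run the half-plane argument to show $\sigma(A)\subseteq\overline{co}\,W(A)$, from which $\sigma_r(A)\subseteq\overline{co}\,W(A)$ follows. This is slightly circuitous: your own argument already establishes $H_{\theta,\omega}\subseteq\rho_n(A)$, hence $\lambda_0\in\rho_n(A)$, so the detour through Theorem~\ref{union} and the weaker conclusion $\lambda_0\in\rho(A)$ is not needed. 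Nothing is wrong, but you could shorten your write-up by stopping one step earlier.
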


\begin{proof} 
$\supseteq$: This inclusion follows from Theorem \ref{union}.\\
$\subseteq$: If $\lambda \notin \overline{co} \{\langle Ax,j(x)\rangle: j(x)\in \mathfrak J(x), x\in X, \|x\|=1\}$,  there exists an open half plane $ H_{\theta,\omega}\subseteq \mathbb C$ with $\overline{co} \{\langle Ax,j(x)\rangle: j(x)\in \mathfrak J(x), x\in X, \|x\|=1\} \subset \mathbb C\backslash H_{\theta,\omega}$ and $\lambda \in H_{\theta,\omega}$. 
 By Theorem \ref{goodeq11} we have that $e^{-i\theta}A$ generates an $\omega$-contractive $C_0$-semigroup and Remark \ref{re}.(2) implies $\lambda\in \rho_n(A)$.
\end{proof}
For bounded operators on a Hilbert space $\mathbb H$ we do not need to take the convex hull  since convexity follows automatically.

\begin{cor}\label{closure}
If $A$ is a bounded operator on a Hilbert space $\mathbb H$ with scalar product \\$<\cdot,\cdot>_{\mathbb H}$, then we have  
$$ \sigma_n(A)=\overline{W(A)},$$
where $W(A) $ is the numerical range of A.
\end{cor}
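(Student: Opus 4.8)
The plan is to deduce Corollary \ref{closure} from Corollary \ref{closure1} by showing that on a Hilbert space the set
$$W(A)=\{\langle Ax,x\rangle_{\mathbb H}: x\in\mathbb H,\ \|x\|=1\}$$
is already convex, so that taking the convex hull in the formula from Corollary \ref{closure1} has no effect. First I would observe that on a Hilbert space the duality set is a singleton: for $x$ with $\|x\|=1$ the only $j(x)\in\mathfrak J(x)$ is $j(x)=x$ itself (viewing $\mathbb H\cong\mathbb H'$ via Riesz), since $\langle x,j(x)\rangle=\|x\|^2=\|j(x)\|^2$ forces equality in Cauchy--Schwarz. Hence
$$\{\langle Ax,j(x)\rangle: x\in\mathbb H,\ \|x\|=1,\ j(x)\in\mathfrak J(x)\}=\{\langle Ax,x\rangle_{\mathbb H}: \|x\|=1\}=W(A),$$
and Corollary \ref{closure1} already gives $\sigma_n(A)=\overline{co}\,W(A)$.

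Next I would invoke the Toeplitz--Hausdorff theorem, which asserts precisely that $W(A)$ is convex for any bounded operator $A$ on a Hilbert space; this is classical and could be cited (e.g.\ from \cite{ha19}, \cite{to18}, or a standard reference). Granting convexity of $W(A)$, one has $co\,W(A)=W(A)$, and therefore
$$\sigma_n(A)=\overline{co}\,W(A)=\overline{W(A)},$$
which is the claim.

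Alternatively, if one prefers a self-contained argument avoiding an explicit appeal to Toeplitz--Hausdorff, I would instead argue directly that $\overline{co}\,W(A)=\overline{W(A)}$ by combining Corollary \ref{closure1} with Proposition \ref{equa}(1) (which gives $\sigma(A)\subseteq\sigma_n(A)$) and the characterisation via rotated generators in Remark \ref{re}.(2): a complex number $\lambda$ lies outside $\sigma_n(A)$ iff some rotate $e^{-i\theta}A-\omega$ generates a contraction semigroup, i.e.\ is dissipative (surjectivity being automatic for bounded operators), which on a Hilbert space says exactly $\operatorname{Re} e^{-i\theta}\langle Ax,x\rangle_{\mathbb H}\le\omega$ for all unit $x$; this is the statement that the closed half plane $\mathbb C\setminus H_{\theta,\omega}$ contains $W(A)$. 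Since $\overline{W(A)}$ is the intersection of all closed half planes containing it — here is where convexity of $W(A)$, hence of $\overline{W(A)}$, is genuinely used — this yields $\sigma_n(A)=\overline{W(A)}$ directly.

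The main obstacle is the convexity of $W(A)$: everything else is either the elementary Riesz/Cauchy--Schwarz identification of $\mathfrak J(x)$ or a direct quotation of Corollary \ref{closure1}. On a Hilbert space this convexity is supplied by the Toeplitz--Hausdorff theorem, and the cleanest exposition simply cites it; a fully elementary proof of Toeplitz--Hausdorff (reducing to $2\times2$ matrices along the segment joining two points of $W(A)$) is standard but would lengthen the argument, so I expect the paper to cite it rather than reprove it.
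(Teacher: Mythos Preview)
Your proposal is correct and matches the paper's proof essentially line for line: the paper simply observes that $\mathfrak J(x)=\{x\}$ in a Hilbert space (citing \cite[II.3.26.(iii)]{en00}) and that $W(A)$ is convex by the Toeplitz--Hausdorff theorem (citing \cite{gr97} and \cite{gr70}), then applies Corollary~\ref{closure1}. Your first argument is exactly this; the alternative you sketch is not needed (and, as you yourself note, ultimately still relies on convexity of $W(A)$, so it does not actually avoid Toeplitz--Hausdorff). One small slip: the inclusion $\sigma(A)\subseteq\sigma_n(A)$ is Proposition~\ref{spectrum}(1), not Proposition~\ref{equa}(1).
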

\begin{proof}   In a Hilbert space one has $J(x)=\{x\}$, see \cite[II.3.26.(iii)]{en00}. Moreover,  the set \\$\{< Ax,x>_{\mathbb H}:  x\in H, \|x\|=1\}$ is always convex by Theorem 1.1-2 in \cite{gr97} or Theorem (Toeplitz-Hausdorff) in \cite{gr70}.
\end{proof}

\begin{remark}\label{re3} In Section $5.4$ we give some examples illustrating cases in which  the residual spectrum $\sigma_r(A)$ contributes to $\sigma_n(A)$.
\end{remark}
The numerical spectrum, unlike the spectrum, characterizes various properties of the operator $(A, D(A))$.

\begin{prop} \label{kompakt}Let $(A, D(A))$ be a closed, densely defined operator on a Banach space $X$. Then we have the following assertions.
\begin{enumerate}\label{com}
  \item $ A$ is bounded if and only if $\sigma_n(A)$ is compact.
\item  $A=0$ if and only if $\sigma_n(A)=\{0\}$.
\end{enumerate}
\end{prop}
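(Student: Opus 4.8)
\emph{Proof idea.}
The two ``only if'' implications are immediate. If $A$ is bounded, Proposition \ref{spectrum}(2) gives $\sigma_n(A)\subseteq\{\lambda\in\mathbb C:|\lambda|\le\|A\|\}$, which together with the closedness of $\sigma_n(A)$ (Proposition \ref{equa}(1)) makes $\sigma_n(A)$ compact. If $A=0$ then $\|A\|=0$, so the same estimate forces $\sigma_n(A)\subseteq\{0\}$, hence $\sigma_n(A)=\{0\}$ since $\sigma_n(A)\neq\emptyset$ by Remark \ref{re}(1). So the content is in the two converses, and the first of these is the heart of the matter. Assume $\sigma_n(A)$ is compact, say $\sigma_n(A)\subseteq B_R:=\{\lambda\in\mathbb C:|\lambda|\le R\}$. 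The plan is to show that \emph{every} rotation $A_\theta=e^{-i\theta}A$ generates an $R$-contractive semigroup and then to read off a resolvent bound. I would verify statement (5) of Theorem \ref{goodeq11} with $\omega=R$: by Theorem \ref{union}, $\langle Ax,j(x)\rangle\in\sigma_n(A)\subseteq B_R$ for all $x\in D(A)$ with $\|x\|=1$ and $j(x)\in\mathfrak J(x)$, so $\mbox{Re}\,e^{-i\theta}\langle Ax,j(x)\rangle\le|\langle Ax,j(x)\rangle|\le R$, i.e.\ $A_\theta-R$ is dissipative; and for real $\lambda>R$ the point $e^{i\theta}\lambda$ has modulus $>R$, hence lies in $\mathbb C\setminus B_R\subseteq\rho_n(A)\subseteq\rho(A)$, so $(e^{i\theta}\lambda-A)$ is surjective. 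Theorem \ref{goodeq11} then yields, in its equivalent form (3), that $H_{\theta,R}\subseteq\rho(A)$ and $\|R(\lambda,A)\|\le d(\lambda,\partial H_{\theta,R})^{-1}$ for $\lambda\in H_{\theta,R}$. Applying this with $\theta=\arg\mu$ for an arbitrary $\mu$ with $|\mu|>R$ --- so that $\mu\in H_{\theta,R}$ and $d(\mu,\partial H_{\theta,R})=|\mu|-R$ --- gives that $\{\mu\in\mathbb C:|\mu|>R\}\subseteq\rho(A)$ with $\|R(\mu,A)\|\le(|\mu|-R)^{-1}$.

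Next I would deduce boundedness of $A$ from this decay of the resolvent at infinity. Since $\|R(\mu,A)\|\to0$ as $|\mu|\to\infty$, the holomorphic function $\mu\mapsto R(\mu,A)$ admits a Laurent expansion $R(\mu,A)=\sum_{n\ge1}C_n\mu^{-n}$ with $C_n\in\mathcal L(X)$, valid for $|\mu|>R$. From $AR(\mu,A)x=\mu R(\mu,A)x-x$ we get $AR(\mu,A)x\to(C_1-I)x$ while $R(\mu,A)x\to0$, so closedness of $A$ forces $C_1=I$. For $x\in D(A)$ the identity $\mu R(\mu,A)x-x=R(\mu,A)Ax$ gives $\mu\bigl(\mu R(\mu,A)x-x\bigr)=(\mu R(\mu,A))Ax\to Ax$, and the same quantity equals $(C_2+C_3\mu^{-1}+\cdots)x\to C_2x$; hence $Ax=C_2x$ on $D(A)$, and since $C_2$ is bounded and $A$ is closed and densely defined, $A=C_2\in\mathcal L(X)$.

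For the ``if'' part of (2), if $\sigma_n(A)=\{0\}$ then $\sigma_n(A)$ is compact, so $A$ is bounded by part (1), and Corollary \ref{closure1} gives $\overline{co}\{\langle Ax,j(x)\rangle:\|x\|=1,\ j(x)\in\mathfrak J(x)\}=\{0\}$, whence $\langle Ax,j(x)\rangle=0$ for all such $x$ and $j(x)$. Consequently every bounded operator $e^{-i\theta}A$ is dissipative, and since $(\lambda-e^{-i\theta}A)$ is invertible for $\lambda>\|A\|$, Theorem \ref{goodeq11} shows that $e^{-i\theta}A$ generates a contraction semigroup, which --- $e^{-i\theta}A$ being bounded --- is $t\mapsto e^{(te^{-i\theta})A}$. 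Hence $\|e^{sA}\|\le1$ for every $s\in\mathbb C$, so the entire $\mathcal L(X)$-valued function $s\mapsto e^{sA}$ is bounded, thus constant by Liouville's theorem and equal to $I$; differentiating at $s=0$ gives $A=0$.

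The main obstacle, I expect, is the last step of the ``if'' direction of (1): passing from the resolvent estimate near infinity to boundedness of $A$. This is exactly where closedness of $A$ is indispensable --- recall that compactness of $\sigma(A)$ alone does \emph{not} force $A$ to be bounded --- so the Laurent/closed-operator computation, rather than any softer spectral argument, is what does the work. A secondary point requiring care is obtaining the clean bound $\|R(\mu,A)\|\le(|\mu|-R)^{-1}$ in the first place: this does not follow from $\sigma(A)\subseteq B_R$, but uses the full strength of Theorem \ref{union} to make each $A_\theta-R$ dissipative, together with Theorem \ref{goodeq11}.
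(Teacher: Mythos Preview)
Your proof is correct. For part~(2) your argument is essentially the paper's: boundedness from part~(1), then the entire function $z\mapsto e^{zA}$ is bounded by $1$, so Liouville forces it to be constant and $A=0$. The substantive difference is in the ``if'' direction of part~(1).

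The paper argues as follows: since $\sigma_n(A)$ is compact it lies in a strip, so (by the later Proposition~\ref{311}(4)) $A$ generates a $C_0$-group; it also lies in a sector, so (Proposition~\ref{311}(3)) the semigroup is analytic. For an analytic semigroup $T(t)X\subseteq D(A)$ for $t>0$, and for a group $T(t)X=X$; combining gives $D(A)=X$, hence $A\in\mathcal L(X)$ by the closed graph theorem.

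Your route is genuinely different: you extract from $\sigma_n(A)\subseteq B_R$ the uniform resolvent decay $\|R(\mu,A)\|\le(|\mu|-R)^{-1}$ via Theorems~\ref{union} and~\ref{goodeq11}, then expand $R(\mu,A)=\sum_{n\ge1}C_n\mu^{-n}$ at infinity and use closedness to identify $C_1=I$ and $A=C_2$ on $D(A)$, whence $A=C_2\in\mathcal L(X)$. This is more self-contained --- it does not appeal to the analyticity machinery or to Proposition~\ref{311}, which in the paper actually appears \emph{after} the present proposition --- and it makes explicit exactly where closedness is used. The paper's argument, on the other hand, fits the result naturally into the semigroup classification that is the theme of the article, and is shorter once that classification is available.
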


\begin{proof} 
\begin{enumerate}
\item $\Rightarrow$: By Proposition \ref{spectrum} and Remark \ref{re} the claim is clear.\\
$\Leftarrow$: If $\sigma_n(A)$ is compact,  there exists a ball $\overline{B_r(0)}$ with center $0$ and radius $r\in \mathbb R$ such that $\sigma_n(A)\subseteq \overline{B_r(0)}$. Then $\sigma_n(A)$ is contained in the strip $$\Pi_{-r, r}:=\{\lambda \in \mathbb C:  -r\le \mbox{Re} \;\lambda\le r\}.$$
By Proposition \ref{311}.4 below, $A$ generates a group. Moreover there exists a sector $\Sigma^{0}_{\delta}+z\;\footnote[1]
{For $ 0<\delta<\frac{\pi}{2}$ and $\theta\in [0,2\pi)$, $z\in \mathbb C$ we define $\Sigma^{\theta}_{\delta}+z:=\{ z+ e^{-i\theta}\;\lambda \in \mathbb C: |arg \lambda |\le \delta\}$.}$
such that $\sigma_n(A) \subset \Sigma^{0}_{\delta}+z$. Again by Proposition \ref{311}.3 below,  $A$ generates an analytic semigroup. Consequently, $A$ generates an analytic group and therefore  
$$X=T(t)X\subset D(A) \;\; \mbox{for all } \;\; t>0.$$
Hence $D(A)=X$ and $A\in \mathcal L(X)$.

\item  $\Rightarrow$: If $A$ is zero, then it is clear that $\sigma_n(A)=\{0\}$. \\
$\Leftarrow$: If $\sigma_n(A)=\{0\}$, then $A$ is a bounded operator on $X$ by assertion (1). Therefore $T(z)= e^{zA}$ is a holomorphic function on $ \mathbb C$. Since  $||T(z)|| \le 1$ for all $z\in \mathbb C$, by Liouville's theorem we obtain that $T(z)$ is constant for all $z\in \mathbb C$. Hence  $T(t)= Id$ for all $t\ge0$. Thus, we obtain $A=0$.
\end{enumerate}\vspace{-0.5cm}
\end{proof}
Since each closed, convex set $\emptyset\not=\Omega \subset\mathbb C$ is the intersection of closed half planes containing $\Omega$, we obtain the following simple classification of $\Omega$ depending on the number of half planes needed for this inclusion.

\begin{lemma} \label{manyform}
Each closed, convex subset $\emptyset \not=\Omega \subseteq \mathbb C$ belongs to (at least) one of the following classes:

\begin{enumerate}
\item $\Omega=\mathbb C$, 
\item $\Omega$ is a half plane $\mathbb C\backslash H_{\theta,\omega}=\{ e^{i\theta} \;\lambda : Re \lambda \le \omega\}$ for some $ \omega\in \mathbb R$ and $\theta\in [0,2\pi)$.
\item $\Omega$ is contained in a sector $\Sigma^{\theta}_{\delta}+z:=\{ z+ e^{-i\theta}\;\lambda \in \mathbb C: |arg \lambda |\le \delta\}$ for some $ 0<\delta<\frac{\pi}{2}$ and $\theta\in [0,2\pi)$, $z\in \mathbb C$. 
\item $\Omega$  is contained in a strip  $\Pi_{\omega_1,\omega_2}^{\theta}:=\{e^{-i\theta} \;\lambda\in \mathbb C: \omega_1 \le Re \lambda\le\omega_2\}$ for $\omega_1\le \omega_2 \in \mathbb R$ and $\theta \in [0, 2\pi)$. 

\item $\Omega$ is compact.
\end{enumerate}
\end{lemma}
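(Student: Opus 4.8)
The plan is to exploit the fact that every nonempty closed convex subset of $\mathbb{C}$ is the intersection of the closed half-planes supporting it, and then to distinguish cases according to how many distinct \emph{normal directions} of such supporting half-planes actually occur. Write every closed half-plane in the form $\mathbb{C}\setminus H_{\theta,\omega}=\{z\in\mathbb{C}:\operatorname{Re}(e^{-i\theta}z)\le\omega\}$. If $\Omega=\mathbb{C}$ we are in class (1), so assume $\Omega\ne\mathbb{C}$ and set
\[
 \Theta:=\Big\{\theta\in[0,2\pi):\ \sup_{z\in\Omega}\operatorname{Re}(e^{-i\theta}z)<\infty\Big\},\qquad \omega(\theta):=\sup_{z\in\Omega}\operatorname{Re}(e^{-i\theta}z)\ \ (\theta\in\Theta).
\]
Strict Hahn--Banach separation of $\Omega$ from any point $p\notin\Omega$ produces some $\theta$ with $\theta\in\Theta$ (and a bounding half-plane missing $p$); hence $\Theta\ne\emptyset$, and applying the same separation to every exterior point gives the representation $\Omega=\bigcap_{\theta\in\Theta}\{z:\operatorname{Re}(e^{-i\theta}z)\le\omega(\theta)\}$.

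Next I would split into three exhaustive subcases for the nonempty set $\Theta\subseteq[0,2\pi)$. First, if $|\Theta|=1$, say $\Theta=\{\theta_0\}$, the representation gives $\Omega=\mathbb{C}\setminus H_{\theta_0,\omega(\theta_0)}$, a half-plane, which is class (2). Second, if $|\Theta|\ge 2$ and $\Theta$ contains a pair of opposite angles $\theta_0$ and $\theta_0+\pi$, then since $\operatorname{Re}(e^{-i(\theta_0+\pi)}z)=-\operatorname{Re}(e^{-i\theta_0}z)$ the two corresponding half-planes meet in the strip $\{z:-\omega(\theta_0+\pi)\le\operatorname{Re}(e^{-i\theta_0}z)\le\omega(\theta_0)\}$, so $\Omega$ lies in a strip $\Pi^{\theta}_{\omega_1,\omega_2}$, which is class (4) (and if in addition $\Omega$ is bounded it is compact, i.e. also class (5)). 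Third, if $|\Theta|\ge 2$ and $\Theta$ contains no opposite pair, choose distinct $\theta_1,\theta_2\in\Theta$; distinctness together with the absence of an opposite pair forces $e^{i\theta_1}$ and $e^{i\theta_2}$ to be non-parallel, so the boundary lines of the half-planes $\{z:\operatorname{Re}(e^{-i\theta_j}z)\le\omega(\theta_j)\}$ cross in a single point $z_0$ and their intersection is a translated sector $\Sigma^{\theta_*}_{\delta}+z_0$ of half-angle $\delta=\tfrac12(\pi-\gamma)\in(0,\tfrac{\pi}{2})$, where $\gamma\in(0,\pi)$ is the angle between $e^{i\theta_1}$ and $e^{i\theta_2}$; hence $\Omega$ sits in that sector, which is class (3). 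Since $\Theta\ne\emptyset$, these three subcases cover all possibilities.

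I expect the only points needing genuine care to be the two elementary planar-geometry computations — that the intersection of two closed half-planes with \emph{antiparallel} normals is exactly a strip, and with \emph{non-parallel} normals is exactly a translated sector of half-angle $<\pi/2$ (producing the explicit $\theta_*$ and $\delta$, respectively $\omega_1$ and $\omega_2$, from the $\theta_j$ and $\omega(\theta_j)$) — together with the verification that the supporting-half-plane representation $\Omega=\bigcap_{\theta\in\Theta}(\cdots)$ genuinely holds, which is the standard consequence of strict separation. Everything else is bookkeeping; the main ``obstacle'' is simply being sure that the subcase split on $\Theta$ is exhaustive and that each geometric conclusion is stated with the angle conventions of $\Sigma^\theta_\delta$ and $\Pi^\theta_{\omega_1,\omega_2}$ used in the statement. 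No compactness input beyond ``closed and bounded implies compact'' is required.
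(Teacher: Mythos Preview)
Your proposal is correct and follows exactly the idea the paper indicates: the paper does not give a formal proof of this lemma but only states, in the sentence preceding it, that the classification follows from writing $\Omega$ as an intersection of closed half planes and counting how many are needed. Your case split on the set $\Theta$ of admissible normal directions is precisely a careful implementation of that hint, so your approach and the paper's coincide.
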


Since the numerical spectrum $\sigma_n(A)$ is closed and convex, it belongs to (at least) one of the classes $(1), \ldots, (5)$.  This allows the following (not disjoint) classification of the operator $(A,D(A))$  with respect to generator properties.

\begin{prop}  \label{311} Every closed, densely defined operator $A$ on a Banach space $X$ satisfies (at least) one of the following conditions.
\begin{enumerate}
\item  $\sigma_n(A)= \mathbb C$, i.e., there is no $ (\theta, \omega) \in[0,2\pi) \times \mathbb R$ such that $ A_{\theta}$ generates an $\omega$-contractive semigroup. 
\item  $\sigma_n(A) \subseteq \mathbb C\backslash H_{\theta,\omega}$, i.e.,  there exists $\theta\in [0, 2\pi)$ and $\omega\in \mathbb R $ such that $ A_{\theta}$ generates an $\omega$-contractive semigroup.

\item  $\sigma_n(A) \subset \Sigma^{\theta}_{\delta}+z$, i.e., there exists $\theta\in [0, 2\pi)$ and $ \omega\in \mathbb R$ such that $ A_{\theta}$ generates an analytic  $\omega$-contractive semigroup.

\item $\sigma_n(A)\subset \Pi_{\omega_1,\omega_2}^{\theta}$, i.e., there exists $ \theta \in [0,2\pi)$ and $\omega_1< \omega_2 \in \mathbb R$ such that $ A_{\theta}$ generates a $\omega_2$-contractive  semigroup and $ -A_{\theta}$ generates a $-\omega_1$-contractive semigroup. 
\item  $\sigma_n(A)$ is contained in a compact set, i.e.,  $ A$ generates a norm continuous semigroup.

\end{enumerate}
 As a special case of (4) we remark that  $A_{\theta}$ generates an isometric group if and only if $\sigma_n(A)\subseteq i \mathbb R$.
\end{prop}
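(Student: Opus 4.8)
The plan is to combine Lemma~\ref{manyform} with the characterisations already established. Since $\sigma_n(A)$ is non-empty, closed and convex (Remark~\ref{re}.(1) and Proposition~\ref{equa}.(1)), Lemma~\ref{manyform} puts it into at least one of the five shape classes, so it only remains to prove, for each class, the asserted equivalence between the shape of $\sigma_n(A)$ and a generator property of a rotated operator. The tools are Remark~\ref{re}.(2), Theorem~\ref{union}, Theorem~\ref{goodeq11}, and the affine invariance $\sigma_n(\alpha A+\beta)=\alpha\sigma_n(A)+\beta$ from Proposition~\ref{equa}.(2). I would treat the cases in the order $(1),(2),(3),(4),(5)$: cases $(3)$ and $(4)$ will be reduced to $(2)$ by a rotation, and $(5)$ to $(3)$ and $(4)$; this ordering also avoids circularity, as the proof of Proposition~\ref{kompakt} quotes the present statement. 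I expect the only non-formal point to be the reduction in case $(3)$.

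\emph{Cases (1) and (2).} For (1): $\sigma_n(A)=\mathbb{C}$ iff $\rho_n(A)=\emptyset$, and by Remark~\ref{re}.(2) this holds iff there is no pair $(\theta,\omega)$ for which $A_\theta$ generates an $\omega$-contractive semigroup (had such a pair existed, the non-empty half plane $H_{\theta,\omega}$ would be contained in $\rho_n(A)$). For (2), the implication ``generator property $\Rightarrow$ shape'' is again Remark~\ref{re}.(2): if $A_\theta$ generates an $\omega$-contractive semigroup then $H_{\theta,\omega}\subseteq\rho_n(A)$, i.e.\ $\sigma_n(A)\subseteq\mathbb{C}\setminus H_{\theta,\omega}$. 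For the converse, assume $\sigma_n(A)\subseteq\mathbb{C}\setminus H_{\theta,\omega}=\{e^{i\theta}\mu:\mathrm{Re}\,\mu\le\omega\}$. By Theorem~\ref{union} both the numerical range $\{\langle Ax,j(x)\rangle\}$ and $\sigma_r(A)$ are contained in this closed half plane: the first inclusion gives $\mathrm{Re}\,e^{-i\theta}\langle Ax,j(x)\rangle\le\omega\|x\|^2$ for all $x\in D(A)$ and $j(x)\in\mathfrak{J}(x)$, so $A_\theta-\omega$ is dissipative; the second gives $e^{i\theta}\lambda\notin\sigma_r(A)$ for real $\lambda>\omega$, hence $\mathrm{rg}(\lambda-A_\theta)$ is dense for such $\lambda$. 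Dissipativity together with closedness of $A_\theta$ makes that range closed, hence equal to $X$, so Theorem~\ref{goodeq11} (implication (5)$\Rightarrow$(1)) shows that $A_\theta$ generates an $\omega$-contractive semigroup.

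\emph{Case (3).} Reduce to case (2) by rotating the sector onto the negative real axis. If $\sigma_n(A)\subseteq\Sigma^\theta_\delta+z$ with $0<\delta<\pi/2$, set $\alpha:=-e^{i\theta}$, $\beta:=e^{i\theta}z$ and $B:=\alpha A+\beta=A_{\theta'}+e^{i\theta}z$, where $\theta'$ is chosen with $e^{-i\theta'}=-e^{i\theta}$; then Proposition~\ref{equa}.(2) gives $\sigma_n(B)=\alpha\sigma_n(A)+\beta\subseteq\{-t\,e^{i\varphi}:t\ge0,\ |\varphi|\le\delta\}$, the left-pointing sector of half-angle $\delta$ with vertex $0$. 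This sector equals $\bigcap_{|\psi|\le\pi/2-\delta}\{\zeta:\mathrm{Re}(e^{i\psi}\zeta)\le0\}$, an intersection of closed half planes of the form treated in case (2) (with $\omega=0$), so the inclusion is equivalent to: $e^{i\psi}B$ generates a contraction semigroup for every $|\psi|\le\pi/2-\delta$. By the standard description of analytic contraction semigroups (see \cite{en00}) this is in turn equivalent to $B$ generating an analytic contraction semigroup of angle at least $\pi/2-\delta$; reversing the affine change, $A_{\theta'}$ generates an analytic semigroup with $\|T_{\theta'}(t)\|\le e^{\omega t}$ for $\omega:=\mathrm{Re}(e^{i\theta}z)\in\mathbb{R}$, and every step is reversible. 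The step that needs care is precisely this last equivalence between the sectorial shape and the notion ``analytic $\omega$-contractive'', which rests on describing analyticity through the family of rotated generators $e^{i\psi}B$; keeping the rotation and translation bookkeeping consistent with Proposition~\ref{equa}.(2) is the main obstacle.

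\emph{Cases (4), (5) and the isometry remark.} A strip $\Pi^\theta_{\omega_1,\omega_2}$ is the intersection of two \emph{parallel} closed half planes, whose two defining inequalities, by case (2) applied to $A$ and to $-A$, correspond precisely to the two conditions in (4) ($A_\theta$ generates an $\omega_2$-contractive semigroup and $-A_\theta$ a $(-\omega_1)$-contractive one); since $\sigma_n(A)\ne\emptyset$ lies in both half planes, they overlap, which forces $\omega_1\le\omega_2$ in the converse. The isometry remark is the limiting case $\omega_1=\omega_2=0$: the two contraction conditions then say that $A_\theta$ and $-A_\theta$ both generate contraction semigroups, i.e.\ $A_\theta$ generates a group with $\|T_\theta(t)x\|=\|x\|$ for all $t\in\mathbb{R}$, equivalently $\sigma_n(A_\theta)\subseteq i\mathbb{R}$. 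Finally, for (5): if $\sigma_n(A)$ is compact it lies both in a horizontal strip (case (4) with $\theta=0$, so $A$ generates a group) and in a sector $\Sigma^\pi_\delta+z$ with vertex far to the right (case (3), so $A$ generates an analytic semigroup); hence $T(t)X\subseteq D(A)$ for $t>0$, and applying the group operator $T(-t)$, which maps $D(A)$ bijectively onto $D(A)$, yields $X\subseteq D(A)$, so $A$ is bounded and $t\mapsto e^{tA}$ is norm continuous. Conversely a norm continuous semigroup has a bounded generator, and then Proposition~\ref{spectrum}.(2) bounds $\sigma_n(A)$, which is compact since it is also closed.
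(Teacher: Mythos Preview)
The paper gives no proof for this proposition; it is stated immediately after Lemma~\ref{manyform} as the direct translation of that geometric classification into generator language via Theorem~\ref{goodeq11} and Remark~\ref{re}.(2), and is then only illustrated by the figures. Your proposal correctly supplies the missing details, and your route through Theorem~\ref{union} in case~(2) is exactly what is needed (the bare inclusion $H_{\theta,\omega}\subseteq\rho_n(A)$ does not by itself say that this particular half plane carries the resolvent estimate, so one really has to recover dissipativity and surjectivity).

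Two small points. In case~(3) there is a sign slip: from $A_{\theta'}=B-e^{i\theta}z$ one gets $\|T_{\theta'}(t)\|\le e^{-t\,\mathrm{Re}(e^{i\theta}z)}$, so $\omega=-\mathrm{Re}(e^{i\theta}z)$. More substantively, the equivalence you invoke there tacitly reads ``analytic $\omega$-contractive'' as ``$A_{\theta'}-\omega$ generates an analytic \emph{contraction} semigroup'' (i.e.\ the contraction bound holds throughout the sector, not merely on $[0,\infty)$); that is the only reading under which the ``i.e.'' in~(3) is a genuine equivalence, since mere analyticity together with $\|T(t)\|\le e^{\omega t}$ on the real axis does not force the rotated operators $e^{i\psi}A_{\theta'}$ to be quasi-contractive. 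Finally, your case~(5) argument is essentially the proof the paper gives for Proposition~\ref{kompakt}.(1); reproving it here, as you do, is the right way to avoid the circularity you flagged.
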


\begin{figure}[h]
    \begin{center}
        \subfloat[$\sigma_n(A) \subseteq \mathbb C\backslash H_{\theta,\omega}$]{%
            \label{fig:first}
\raisebox{0.7cm}{
            \includegraphics[width=0.40\textwidth]{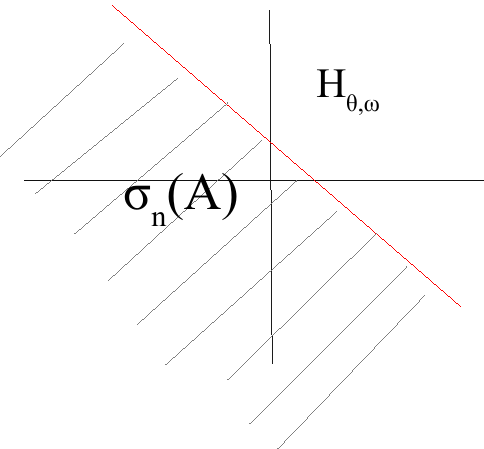}}
        }%
\hspace{1cm}
        \subfloat[$\sigma_n(A) \subset \Sigma^{\theta}_{\delta}+z$]{
           \label{fig:second}
           \includegraphics[width=0.40\textwidth]{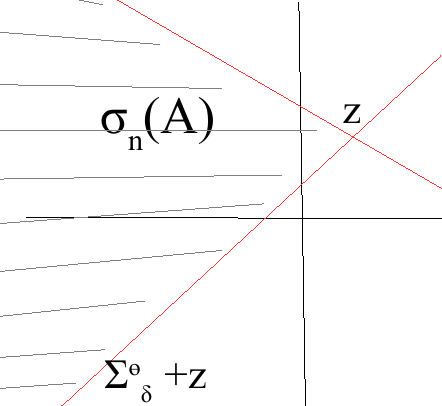}}
\end{center}
\end{figure}
\begin{figure}[h]
    \begin{center}
        \subfloat[$\sigma_n(A) \subseteq \Pi_{\omega_1,\omega_2}^{\theta}$]{%
            \label{fig:first}
            \includegraphics[width=0.40\textwidth]{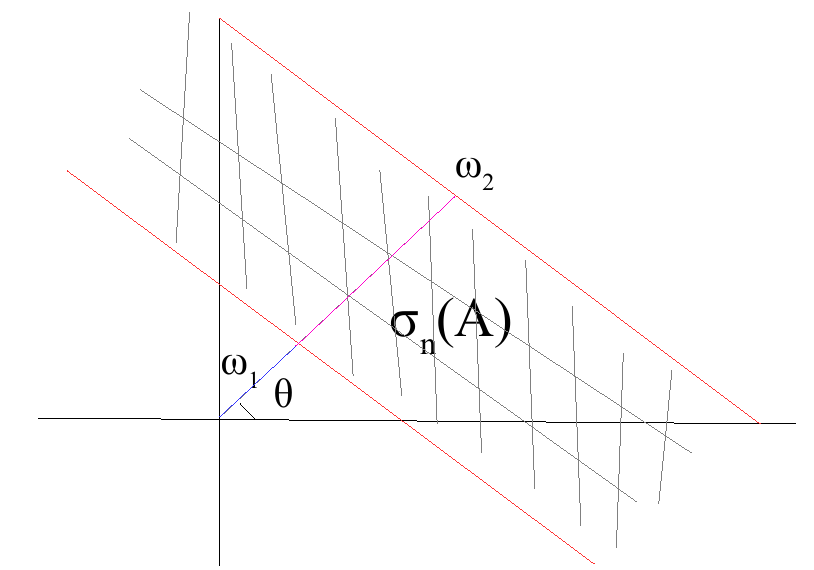}
}
\hspace{1cm}               
\subfloat[$\sigma_n(A)$ is compact ]{%
           \label{fig:second}
           \includegraphics[width=0.40\textwidth]{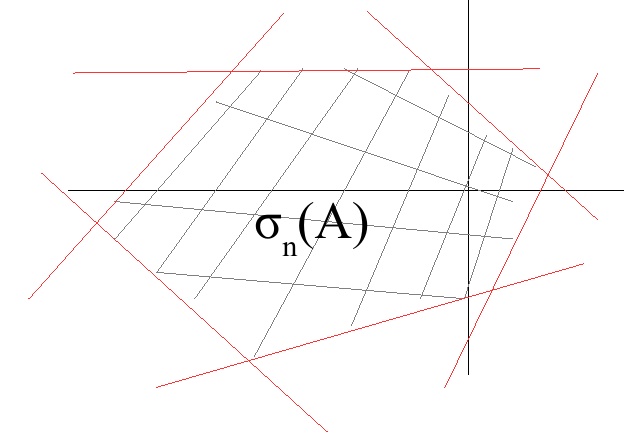}
        }
     \end{center}
\vskip 0.2in
\caption{$\sigma_n(A)$ is contained in different sets  determined by few half planes}
   \label{fig:subfigures}
\end{figure}
\newpage

\section{Numerical growth bound, numerical spectral bound and numerical radius}

We now introduce constants related to the numerical spectrum $\sigma_n(A)$ analogously to the growth bound $\omega_0(A)$ and spectral bound $s(A)$ of a generator $(A,D(A))$ (see \cite[Def.~I.5.6]{en00} and \cite[Def.~IV.2.1]{en00}).


\begin{definition} \label{defnumgrowthbound1} Let $(A, D(A))$ be a closed and densely defined operator on a Banach space X. For $\theta\in [0,2 \pi)$ we consider $A_{\theta}:= e^{-i\theta}A$ and, in case it exists,  the corresponding semigroup $(T_{\theta}(t))_{t\ge0}$. Then we call
$$\omega_n^{\theta}(A):= \inf\{\omega \in \mathbb R:  ||T_{\theta}(t)||\le e^{t \omega} \;\mbox{for all}\;  t\ge0\}$$
the {\em numerical  growth bound} of $A$ corresponding to the angle $\theta\in [0,2\pi)$, where we set\\ $\inf \emptyset := +\infty$.
\end{definition}
\begin{prop}\label{pro.III.1.2}
\begin{enumerate}
\item  In contrast to the growth bound,  the numerical bound, \\if not $+\infty$,  is attained, i.e.,  for each $\theta \in [0,2\pi)$ we have $$ \omega_n^{\theta}(A)=\min\{\omega\in \mathbb R:  ||T_{\theta}(t)||\le e^{t \omega}  \mbox{for all}\;  t\ge0\}$$
and therefore $|| T_{\theta}(t)||\le e^{t \omega_n^{\theta}(A)}$ for all $t\ge 0$.
\item If $A_{\theta}$ generates the semigroup $(T_{\theta}(t))_{t\ge0}$, then $\omega_n^{\theta}(A)\ge \omega_0(A_{\theta})$ and   $$ -\infty<\omega_n^{\theta}(A)\le +\infty.$$
\end{enumerate}
\end{prop}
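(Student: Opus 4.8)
The plan is to reduce everything to the single formula
\[
\omega_n^{\theta}(A)=\sup_{t>0}\tfrac1t\log\|T_{\theta}(t)\|=:s,
\]
valid whenever $A_{\theta}$ generates $(T_{\theta}(t))_{t\ge0}$. Writing $\Omega:=\{\omega\in\mathbb R:\|T_{\theta}(t)\|\le e^{t\omega}\text{ for all }t\ge0\}$, observe that the constraint at $t=0$ is vacuous (both sides equal $1$), while for $t>0$ the inequality $\|T_{\theta}(t)\|\le e^{t\omega}$ is equivalent to $\tfrac1t\log\|T_{\theta}(t)\|\le\omega$. Hence $\omega\in\Omega$ if and only if $\omega\ge s$, so $\Omega$ is the half-line $[s,\infty)$ (empty precisely when $s=+\infty$), and $\omega_n^{\theta}(A)=\inf\Omega=s$.

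The crux is to show $s>-\infty$ --- equivalently, that $\|T_{\theta}(t)\|$ does not decay faster than every exponential as $t\to0^{+}$. Here one uses that $(T_{\theta}(t))_{t\ge0}$ is a genuine $C_0$-semigroup on $X\neq\{0\}$: choosing $x\neq0$, strong continuity gives $T_{\theta}(t)x\to x\neq0$ as $t\to0^{+}$, so $\|T_{\theta}(t_0)\|>0$ for some $t_0>0$, and therefore $s\ge\tfrac1{t_0}\log\|T_{\theta}(t_0)\|>-\infty$. The bound $s\le+\infty$ is trivial, so $-\infty<\omega_n^{\theta}(A)\le+\infty$; the value $+\infty$ is consistent with the convention $\inf\emptyset=+\infty$ and genuinely occurs when no $\omega$-contractive rescaling exists.

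Part (1) then follows immediately: if $\omega_n^{\theta}(A)\neq+\infty$ then $s\in\mathbb R$, and since $\Omega=[s,\infty)$ we have $s\in\Omega$, i.e. the infimum is a minimum and $\|T_{\theta}(t)\|\le e^{t\,\omega_n^{\theta}(A)}$ for all $t\ge0$. (Alternatively, without naming $s$, one picks $\omega_k\in\Omega$ with $\omega_k\downarrow\omega_n^{\theta}(A)$ and passes to the limit in $\|T_{\theta}(t)\|\le e^{t\omega_k}$ for each fixed $t$; this uses only that $\Omega$ is closed, being an intersection of the closed sets $\{\omega:\|T_{\theta}(t)\|\le e^{t\omega}\}$.) For the remaining inequality in (2), any $\omega\in\Omega$ yields an estimate $\|T_{\theta}(t)\|\le M e^{t\omega}$ with $M=1$, so $\omega\ge\omega_0(A_{\theta})$ by the definition of the growth bound; taking the infimum over $\omega\in\Omega$ gives $\omega_n^{\theta}(A)\ge\omega_0(A_{\theta})$ (and if $\Omega=\emptyset$ this reads $+\infty\ge\omega_0(A_{\theta})$).

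I expect the only step needing real thought to be the bound $s>-\infty$: one must exclude the pathological possibility of an instantaneous super-exponential contraction, and that is precisely where strong continuity of the semigroup at $t=0$ enters. Everything else is bookkeeping with the definitions.
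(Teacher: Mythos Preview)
Your proof is correct. For part~(1) the paper does exactly the sequential limit argument you mention as an alternative: take $\omega_m\searrow\omega_n^{\theta}(A)$ and pass to the limit in $\|T_{\theta}(t)\|\le e^{t\omega_m}$. Your primary route via the identification $\omega_n^{\theta}(A)=\sup_{t>0}\tfrac1t\log\|T_{\theta}(t)\|$ is a clean repackaging of the same idea, and in fact this identity is proved separately later in the paper (Proposition~\ref{equality3}).

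The one genuine difference is in part~(2), for the bound $\omega_n^{\theta}(A)>-\infty$. The paper appeals to the earlier observation that $\sigma_n(A)\neq\emptyset$ (Remark~\ref{re}.(1)): if $\omega_n^{\theta}(A)=-\infty$ then every half plane $H_{\theta,\omega}$ would lie in $\rho_n(A)$, forcing $\rho_n(A)=\mathbb C$. Your argument instead uses strong continuity directly: $T_{\theta}(t)x\to x\neq 0$ as $t\to 0^{+}$ gives $\|T_{\theta}(t_0)\|>0$ for some $t_0>0$, hence $s\ge\tfrac{1}{t_0}\log\|T_{\theta}(t_0)\|>-\infty$. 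This is more self-contained (it does not rely on any structural facts about $\sigma_n(A)$) and in effect supplies an independent proof that $\sigma_n(A)\neq\emptyset$ whenever some rotation of $A$ generates a $C_0$-semigroup. The paper's route, on the other hand, keeps the link to the numerical spectrum explicit. Both are short and valid.
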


\begin{proof}
\begin{enumerate}
\item Let $(\omega_m)_{m\in \mathbb N}$ be a monotone sequence with $\omega_m\searrow \omega_n^{\theta}(A)$ as $m \to \infty$ for some fixed $\theta$. Then we have  $$||T_{\theta}(t)||\le e^{t \omega_m}\;\; \mbox{for all} \;\;m\in \mathbb N, t\ge0.$$
Hence 
$$e^{-t\omega_n^{\theta}(A)}||T_{\theta}(t)||\le e^{-t\omega_n^{\theta}(A)} e^{t \omega_m}\;\; \mbox{for all} \;\;m\in \mathbb N, t\ge0,$$
or $$e^{-t\omega_n^{\theta}(A)}||T_{\theta}(t)||\le \inf_{m\in \mathbb N}{e^{t(\omega_m-\omega_n^{\theta}(A))}}  \;\mbox{for all} \;\;t\ge0,$$
and thus
$$ ||T_{\theta}(t)||\le e^{t\omega_n^{\theta}(A)} \;\;\mbox{for all} \;\;t\ge0.$$
\item It is clear by definition that $\omega_n^{\theta}(A)\ge \omega_0(A_{\theta})$ and, since $\sigma_n(A)\not=\emptyset$, it follows that   $ \omega_n^{\theta}(A)>-\infty$. In  Example \ref{ex1} below we  see that $\omega_n^{\theta}(A)= +\infty$ may occur.
\end{enumerate}\vspace{-0.5cm}
\end{proof}

Analogously to the spectral bound we now define the numerical spectral bound of  $A$.

\begin{definition} \label{numspbou}For a closed, densely defined operator $(A,D(A))$ on a Banach space $X$ 
the {\em numerical spectral bound} of $A$  is  
$$s_n(A) := \sup\{\mbox{Re} \;\lambda : \lambda\in \sigma_n(A)\}.$$
Moreover, for $\theta\in [0,2\pi)$ we define $$ s_n^{\theta}(A):=s_n(A_{\theta}),$$
the numerical spectral bound of $A_{\theta}$. 
\end{definition}

In the following proposition we prove for a generator $(A,D(A))$ the equality between $s_n^{0}(A)$ and $w_n^{0}(A)$.
\begin{prop} \label{equality3} Let $(A, D(A))$ be a closed and densely defined operator on a Banach space $X$ and $(T_{0} (t))_{t\ge0}$ be the semigroup generated by $A$. Then
$$ s_n^{0}(A)\stackrel{(1)}= \omega_n^{0}(A)\stackrel{(2)}=\sup_{t>0}\frac{1}{t} \log|| T_{0}(t)||\stackrel{(3)}=\lim_{t\searrow0}\frac{1}{t} \log|| T_{0}(t)||.$$
\end{prop}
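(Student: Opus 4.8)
The plan is to establish the three equalities from right to left, since the two rightmost expressions are classical and the novelty is only in identifying them with $s_n^0(A)$. Equality (3), $\sup_{t>0}\tfrac1t\log\|T_0(t)\| = \lim_{t\searrow 0}\tfrac1t\log\|T_0(t)\|$, is a standard consequence of subadditivity: the function $f(t):=\log\|T_0(t)\|$ is subadditive ($f(s+t)\le f(s)+f(t)$, since $T_0(s+t)=T_0(s)T_0(t)$), so by Fekete's lemma $\lim_{t\searrow 0} f(t)/t = \sup_{t>0} f(t)/t$ (the version of Fekete's lemma for the limit at $0$ rather than at $\infty$; cf. the growth bound discussion in \cite[Ch.~I]{en00}). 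Equality (2), $\omega_n^0(A)=\sup_{t>0}\tfrac1t\log\|T_0(t)\|$, is essentially the definition unwound: by Proposition \ref{pro.III.1.2}(1), $\omega_n^0(A)$ is the \emph{minimum} $\omega$ with $\|T_0(t)\|\le e^{t\omega}$ for all $t\ge 0$, i.e. the minimum $\omega$ with $\tfrac1t\log\|T_0(t)\|\le\omega$ for all $t>0$, which is exactly $\sup_{t>0}\tfrac1t\log\|T_0(t)\|$ (here one uses that this supremum is finite, which holds since $\omega_n^0(A)<+\infty$ — but note $+\infty=+\infty$ makes the identity trivially true in the degenerate case, so no case distinction is really needed).

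The substantive step is equality (1), $s_n^0(A)=\omega_n^0(A)$. I expect this to be the main obstacle, and I would prove it by a sandwiching argument using Remark \ref{re}.(2) and Proposition \ref{pro.III.1.2}. For ``$\le$'': if $\omega>\omega_n^0(A)$ then, using Proposition \ref{pro.III.1.2}(1), $A=A_0$ generates an $\omega$-contractive semigroup, so by Remark \ref{re}.(2) every $\lambda$ with $\operatorname{Re}\lambda>\omega$ lies in $\rho_n(A)$; hence $\sigma_n(A)\subseteq\{\operatorname{Re}\lambda\le\omega\}$, giving $s_n^0(A)\le\omega$, and letting $\omega\searrow\omega_n^0(A)$ yields $s_n^0(A)\le\omega_n^0(A)$. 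For ``$\ge$'': suppose $s_n^0(A)<\omega$; then $\sigma_n(A)\subseteq\{\operatorname{Re}\lambda\le\omega\}=\mathbb C\setminus H_{0,\omega}$, so by the very definition of $\sigma_n$ the half plane $H_{0,\omega}$ is contained in $\rho_n(A)$, which by Remark \ref{re}.(2) means there is a pair $(\theta',\omega')$ with $H_{0,\omega}\subseteq H_{\theta',\omega'}$ and $A_{\theta'}$ generating an $\omega'$-contractive semigroup; since $H_{0,\omega}$ is a translate of the right half plane, the only half planes containing it are of the form $H_{0,\omega''}$ with $\omega''\le\omega$ (one compares boundary lines), so $\theta'=0$ and $\omega'\le\omega$, whence $A$ generates an $\omega$-contractive semigroup and $\omega_n^0(A)\le\omega$. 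Letting $\omega\searrow s_n^0(A)$ gives $\omega_n^0(A)\le s_n^0(A)$. Combining the two inequalities proves (1).

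One point to be careful about: in the ``$\ge$'' direction, the definition of $\rho_n(A)$ covers a point with \emph{some} half plane, so strictly one should note that the half plane $H_{0,\omega}$ itself qualifies (it is an open half plane contained in $\rho(A)$ with the required resolvent estimate, by Theorem \ref{goodeq11}(1)$\Leftrightarrow$(3) applied to the $\omega$-contractive semigroup we are about to produce) — but this is exactly the content of Remark \ref{re}.(1)–(2), so invoking that remark closes the small gap without a separate argument. The degenerate cases ($\sigma_n(A)$ unbounded to the right, i.e. $s_n^0(A)=+\infty$, or $A$ not a generator after rotation) are handled uniformly because all four quantities are then $+\infty$ by Proposition \ref{pro.III.1.2}(2) and Definition \ref{numspbou}.
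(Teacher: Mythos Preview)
Your treatment of equalities (2) and (3) is correct and matches the paper's approach: the paper also derives (2) by unwinding the definition of $\omega_n^0(A)$ and proves (3) via the subadditivity of $t\mapsto \log\|T_0(t)\|$ (after normalising to $\omega_n^0(A)=0$).

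The direction $s_n^0(A)\le\omega_n^0(A)$ in (1) is also fine. The genuine problem is your argument for $\omega_n^0(A)\le s_n^0(A)$. From $\sigma_n(A)\subseteq\mathbb C\setminus H_{0,\omega}$ you correctly get $H_{0,\omega}\subseteq\rho_n(A)$, but Remark~\ref{re}.(2) only says that each \emph{point} of $\rho_n(A)$ lies in \emph{some} good half plane; it does not say that a half plane contained in $\rho_n(A)$ is itself good, nor that it is contained in a single good half plane. Your ``fix'' in the last paragraph is circular: you justify that $H_{0,\omega}$ is good by invoking Theorem~\ref{goodeq11}(1)$\Leftrightarrow$(3) for the $\omega$-contractive semigroup ``we are about to produce''---but producing that semigroup is precisely what you are trying to prove. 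Remark~\ref{re} does not close this gap.

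A clean way to finish, in the spirit of the paper's hint ``by definition and the equivalences in Theorem~\ref{goodeq11}'', is to use condition~(5) of Theorem~\ref{goodeq11} together with the inclusion $W(A)\subseteq\sigma_n(A)$ from Theorem~\ref{union}. If $c:=s_n^0(A)<\infty$, then for every $x\in D(A)$ with $\|x\|=1$ and every $j(x)\in\mathfrak J(x)$ one has $\langle Ax,j(x)\rangle\in W(A)\subseteq\sigma_n(A)$, hence $\mathrm{Re}\,\langle Ax,j(x)\rangle\le c$. The surjectivity of $\lambda-A$ for $\lambda>c$ is automatic because $H_{0,c}\subseteq\rho_n(A)\subseteq\rho(A)$. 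Thus Theorem~\ref{goodeq11}(5)$\Rightarrow$(1) gives that $A$ generates a $c$-contractive semigroup, i.e.\ $\omega_n^0(A)\le c=s_n^0(A)$.
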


\begin{proof} The inequality $(1)$ holds by definition and the equivalences in Theorem \ref{goodeq11}.\\
From  $||T_0(t)||\le e^{ t \omega_n^{0}(A) }$ for all $t\ge 0$ we obtain
$$ \sup_{t>0}\frac{1}{t} \log|| T_0(t)||\le \omega_n^{0}(A).$$

On the other hand, assume that $\mu:= \sup_{t>0}\frac{1}{t} \log|| T_0(t)|| < \omega_n^{0}(A)$ and take  $\phi\in (\mu, \omega_n^{0}(A))$, i.e., for all $t\ge0$ we have 
\begin{gather}
\frac{1}{t} \log|| T_0(t)||\le \phi.   \label{II.1.1} \tag{$*$}
\end{gather} 
Since $\phi<\omega_n^{0}(A) $,   there exists $t>0$ such that $||T_0(t)||>e^{\phi t}$. Therefore $$\frac{1}{t} \log|| T_0(t)||> \phi,$$  contradicting the inequality \eqref{II.1.1}. \\

Showing the third equality, we suppose without loss of generality that,  $\omega_n^{0}(A)=0$, i.e., $||T_0(t)||\le 1$ for all $t\ge0$. By the semigroup property, the map $t\longmapsto ||T_0(t)||$ is monotonically decreasing and so is the map $t\longmapsto \frac{1}{t} \log ||T_0(t)||$. Hence, $$ \lim_{t\searrow0}\frac{1}{t} \log|| T_{0}(t)||=\sup_{t>0}\frac{1}{t} \log|| T_{0}(t)||.$$
\vspace{-1cm}
\end{proof}

Compare the above equalities to 
$$ s(A)\le \omega_0(A)=\inf_{t\ge0} \frac{1}{t} \log||T_{0}(t)||=\lim_{t\to \infty}\frac{1}{t} \log||T_{0}(t)||$$
from \cite[IV.2.2]{en00}. Combining this with Proposition \ref{equality3} we obtain 
$$ s(A)\le \omega_0(A)\le s_n^{0}(A)= \omega_n^{0}(A).$$

\begin{definition}\label{numrad} 
For a bounded operator $A$ on a Banach space $X$, the {\em numerical radius} of A is
$$r_n(A):=\sup\{|\lambda|: \lambda\in \sigma_n(A)\}.$$
\end{definition}
From Proposition \ref{spectrum} we conclude that  $r(A)\le r_n(A)$. 
Moreover, using Proposition  \ref{equality3} we obtain $$r_n(A)= \sup_{\theta\in [0,2\pi)} \omega_n^{\theta} (A)=\sup_{\theta\in [0,2\pi)} s_n^{\theta}(A).$$

\section {Applications}

Our  concept of numerical spectrum allows  to generalize the main result in \cite{kl11} from Hilbert  to Banach spaces. 

\begin{theorem}
Let $(T(t))_{t\ge0}$ be a  $C_0$-semigroup with generator $(A, D(A))$.  The following are equivalent.
\begin{enumerate}[(a)]
\item \label{III.2.3(a)}There exists $\epsilon>0$ such that $||T(t)||\le e^{-t\epsilon} $ for all $t\ge0$. i.e., $\sigma_n(A)\subseteq \mathbb C_{-\epsilon}$.
\item \label{III.2.3(b)}There exists $\epsilon>0$ such that Re $\langle Ax,j(x)\rangle \le -\epsilon ||x||^2$ for all  $ x\in D(A)$ and $ j(x)\in \mathfrak J(x)$.
\end{enumerate}
\end{theorem}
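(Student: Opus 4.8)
The plan is to reduce everything to the contractive case and then invoke Theorem~\ref{goodeq11}. For a fixed $\epsilon>0$ the rescaled family $S(t):=e^{\epsilon t}T(t)$ is again a $C_0$-semigroup, with generator $B:=A+\epsilon$ and $D(B)=D(A)$; the property ``$\|T(t)\|\le e^{-t\epsilon}$ for all $t\ge0$'' appearing in~(a) says exactly that $\|S(t)\|\le 1$ for all $t\ge 0$, while the inequality in~(b) says, since $\langle x,j(x)\rangle=\|x\|^2$, exactly that $\mbox{Re}\,\langle Bx,j(x)\rangle\le 0$ for all $x\in D(B)$ and all $j(x)\in\mathfrak J(x)$. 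So I would prove, for each fixed $\epsilon>0$, the equivalence of ``$B$ generates a contraction semigroup'' and ``$B$ is dissipative with respect to every duality map'', and then quantify over $\epsilon$. The reformulation of (a) in terms of $\sigma_n(A)$ is then immediate from Remark~\ref{re}.(2) applied with $\theta=0$, $\omega=-\epsilon$ (equivalently, from the identity $s_n^0(A)=\omega_n^0(A)$ of Proposition~\ref{equality3}).

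For ``(b)$\Rightarrow$(a)'' I would apply Theorem~\ref{goodeq11} directly to $A$ with $\theta=0$ and $\omega=-\epsilon$. Its statement~(5) requires, for each $x\in D(A)$, the \emph{existence} of some $j(x)\in\mathfrak J(x)$ with $\mbox{Re}\,\langle Ax,j(x)\rangle\le-\epsilon\|x\|^2$; under hypothesis~(b) \emph{every} $j(x)\in\mathfrak J(x)$ does this, so the condition holds. The surjectivity clause in~(5) is automatic: since $A$ generates a $C_0$-semigroup, $\rho(A)$ contains a right half-line, so $(\lambda-A)$ is bijective for all large $\lambda$, in particular for some $\lambda>-\epsilon$. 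Hence by Theorem~\ref{goodeq11} the operator $A=A_0$ generates a $(-\epsilon)$-contractive semigroup, i.e. $\|T(t)\|\le e^{-t\epsilon}$.

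For ``(a)$\Rightarrow$(b)'' I would argue directly with $S(t)=e^{\epsilon t}T(t)$, a contraction semigroup with generator $B=A+\epsilon$. Fix $x\in D(A)$ and an arbitrary $j(x)\in\mathfrak J(x)$. For $h>0$ one has $\mbox{Re}\,\langle S(h)x,j(x)\rangle\le\|S(h)x\|\,\|j(x)\|\le\|x\|^2=\mbox{Re}\,\langle x,j(x)\rangle$, hence $\mbox{Re}\,\langle S(h)x-x,j(x)\rangle\le 0$; dividing by $h$ and letting $h\searrow 0$, using $h^{-1}(S(h)x-x)\to Bx$ in $X$ and continuity of $j(x)$, gives $\mbox{Re}\,\langle Bx,j(x)\rangle\le 0$, that is $\mbox{Re}\,\langle Ax,j(x)\rangle=\mbox{Re}\,\langle Bx,j(x)\rangle-\epsilon\|x\|^2\le-\epsilon\|x\|^2$.

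The only genuine subtlety — the thing to watch — is the gap between the ``for all $j(x)$'' in~(b) and the ``there exists $j(x)$'' in Theorem~\ref{goodeq11}.(5). In the direction ``(b)$\Rightarrow$(a)'' this is harmless, the stronger hypothesis simply feeding the weaker one. In the direction ``(a)$\Rightarrow$(b)'' it runs the other way, and the short differentiation argument above is precisely what bridges it: it is the classical fact that a generator of a contraction semigroup is dissipative with respect to \emph{every} duality map, not merely one. Everything else — the range condition, and the bookkeeping among $A=A_0$ and $B=A+\epsilon$ — is routine.
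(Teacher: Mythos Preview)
Your proof is correct and follows essentially the same route as the paper: rescale by $e^{\epsilon t}$ to reduce both statements to the contraction case, then invoke Lumer--Phillips (equivalently Theorem~\ref{goodeq11} with $\theta=0$, $\omega=-\epsilon$), using that $A$ is already a generator to supply the range condition. Your explicit differentiation argument for (a)$\Rightarrow$(b) spells out what the paper leaves as the one-line appeal ``hence $A+\epsilon$ is dissipative for all $j(x)$'', but the strategy is identical.
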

\begin{proof}
$(\ref {III.2.3(a)}) \Rightarrow (\ref{III.2.3(b)}): $ By assumption, the $C_0$-semigroup $(e^{t\epsilon} T(t))_{t\ge0}$ is a contraction semigroup. Hence Re $\langle (A-\epsilon)x,j(x)\rangle \le0$ for all $x\in D(A)$ and $j(x)\in \mathfrak J(X)$. Thus, 
\begin{align*}
	\mathrm{Re} \langle Ax,j(x)\rangle \le -\epsilon ||x||^2
\end{align*}
for all  $ x\in D(A)$ and $ j(x)\in \mathfrak J(x)$.\\

$(\ref {III.2.3(b)}) \Rightarrow (\ref{III.2.3(a)}): $ There exists $M\ge1$ and $\omega\in \mathbb R$ such that $||T(t)|| \le M e^{t\omega}$ for all $t\ge0$.  By the assumption,  $(A+\epsilon, D(A))$ is dissipative and  $\lambda \in \rho(A) $  for all $\lambda > \omega$. By the Lumer-Phillips Theorem \cite[Theorem II.3.15]{en00} we obtain $||T(t)||\le e^{-t\epsilon}$ for all $t\ge0$ and the assertion follows. 
\end{proof}

In the next proposition we extend Hildebrandt's Theorem \cite[Thm. 6.3]{sh03}, \cite[Thm. 2.4]{hi66} to bounded operators on Banach spaces. It yields the convex hull of the spectrum $\sigma(A)$ by varying the norm of the space $X$. For this purpose we use the following notation. \\
Let $A\in \mathcal L(X)$. 
We write $\sigma_n^{||\cdot||}(A)$  to indicate that the numerical spectrum is computed with respect to $||\cdot||$. Clearly, the numerical spectrum changes when we switch to a different, but equivalent norm on $X$.

\begin{prop}
If $A$ is a bounded operator on a Banach space $X$ with norm $||\cdot||_{X}$, then 
$$ \overline{co}\;\big(\sigma(A)\big)= \bigcap\{ \sigma_n^{||\cdot||}(A): ||\cdot|| \mbox{ equivalent to }\; ||\cdot||_{X} \}.$$
\end{prop}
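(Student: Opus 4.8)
The plan is to prove the two inclusions separately, using Corollary \ref{closure1} to replace each $\sigma_n^{\|\cdot\|}(A)$ by the closed convex hull of the numerical range computed in the norm $\|\cdot\|$, and then exploiting the freedom to renorm.

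First I would establish the inclusion $\overline{co}(\sigma(A))\subseteq\bigcap\{\sigma_n^{\|\cdot\|}(A)\}$. This direction is the easy one: for every norm $\|\cdot\|$ equivalent to $\|\cdot\|_X$ the operator $A$ is still bounded on $(X,\|\cdot\|)$ and its spectrum is unchanged (the two normed spaces are topologically identical), so Proposition \ref{spectrum}(1) gives $\sigma(A)\subseteq\sigma_n^{\|\cdot\|}(A)$; since each $\sigma_n^{\|\cdot\|}(A)$ is closed and convex by Proposition \ref{equa}(1), it contains $\overline{co}(\sigma(A))$, and hence so does the intersection.

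The substantial direction is $\bigcap\{\sigma_n^{\|\cdot\|}(A)\}\subseteq\overline{co}(\sigma(A))$, equivalently: if $\lambda\notin\overline{co}(\sigma(A))$ then there is an equivalent norm $\|\cdot\|$ with $\lambda\notin\sigma_n^{\|\cdot\|}(A)$. By Corollary \ref{closure1} it suffices to produce an equivalent norm in which the closed convex hull of the numerical range $\overline{co}\,W_{\|\cdot\|}(A)$ misses $\lambda$; by a separating-hyperplane (Hahn--Banach) argument it is enough to find an equivalent norm and a closed half plane $\mathbb{C}\setminus H_{\theta,\omega}$ containing $W_{\|\cdot\|}(A)$ but not $\lambda$. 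Since $\lambda\notin\overline{co}(\sigma(A))$, a half plane separates $\lambda$ from $\sigma(A)$; after an affine change of variables (legitimate by Proposition \ref{equa}(2)) we may reduce to the case $\operatorname{Re}\mu\le-\delta<0$ for all $\mu\in\sigma(A)$ while $\lambda=0$, so that $A$ generates a uniformly exponentially stable group-like situation — more precisely $\sigma(A)$ lies in the open left half plane, hence $-A$ has spectrum in the open left half plane too and $s(A)<0$. The classical fact (essentially the Hildebrandt renorming, cf. \cite[Thm.~6.3]{sh03}) is that whenever $s(A)<0$ one can renorm $X$ with an equivalent norm making $e^{tA}$ a contraction semigroup: concretely set $\|x\|_{new}:=\sup_{t\ge0}\|e^{tA}x\|_X$, which is finite because $\|e^{tA}\|_X\le M e^{ts(A)}\to0$, is equivalent to $\|\cdot\|_X$, and satisfies $\|e^{tA}x\|_{new}\le\|x\|_{new}$. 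Then $A-\text{(something)}$ is dissipative in $\|\cdot\|_{new}$, so $W_{\|\cdot\|_{new}}(A)$ lies in a left half plane not containing $0=\lambda$, whence $\lambda\notin\sigma_n^{\|\cdot\|_{new}}(A)$ by Corollary \ref{closure1}.

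The main obstacle is the renorming step: one must check carefully that the supremum defining $\|\cdot\|_{new}$ is an equivalent norm (upper bound from $\sup_t\|e^{tA}\|_X<\infty$, lower bound from the $t=0$ term) and, more delicately, that it places $W(A)$ strictly inside a half plane avoiding $\lambda$ rather than merely on its boundary — this requires using $s(A)<0$ with a strict margin, e.g. applying the construction to $e^{t\delta'}e^{tA}$ for a small $\delta'$ with $s(A)+\delta'<0$ so that one gets $\operatorname{Re}\langle Ax,j(x)\rangle\le-\delta'\|x\|^2$. A secondary point to handle cleanly is the passage, via Hahn--Banach, between "$\lambda\notin\overline{co}(S)$" and "a closed half plane contains $S$ but not $\lambda$", and the bookkeeping of the affine reduction through Proposition \ref{equa}(2); these are routine but should be stated.
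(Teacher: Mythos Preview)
Your proposal is correct and follows essentially the same route as the paper: the easy inclusion is handled identically, and for the hard inclusion both you and the paper use the renorming $|||x|||:=\sup_{t\ge0}\|e^{-t\omega}e^{tA}x\|$ (your $\|\cdot\|_{new}$ with the $\delta'$ margin) to push the numerical spectral bound down to any $\omega>s(A)$, then rotate. The only cosmetic differences are that the paper works directly with $s_n^\theta(A)$ and the rotation rather than your affine reduction via Proposition~\ref{equa}(2) and Corollary~\ref{closure1}; also, your aside that ``$-A$ has spectrum in the open left half plane too'' is false (it lies in the right half plane) but is never used, so it does no harm.
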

\begin{proof}
Since $\sigma(A)$ is independent of the chosen norm and always contained in the (convex) numerical spectrum, the inclusion $"\subseteq"$ is obvious. \\
To show  the converse inclusion it suffices  to consider $A_{\theta}$ for $\theta=0$. 
 Since $A_{0}$ is bounded, its spectral and growth bound coincide, i.e., $s(A_{0})=\omega_{0}(A_{0})$.  For every $\omega>s(A_{0})$ there exists $M\ge 1$ such that $$||e^{t A_0}||\le M e^{t\omega} \; \mbox{for} \;t\ge0.$$
We  define an equivalent norm by
$$ ||| x|||:=\sup_{t\ge0} || e^{-t\omega} e^{t A_0}x|| \;\; \mbox{for}\; x\in X.$$
For this norm we obtain
$$ |||e^{t A_0}x|||\le e^{t\omega}|||x|||\;\; \mbox{for} \;t\ge0,\; x\in X,$$
which implies $s_n^{0}(A_{0})\le \omega$ on the Banach space $(X,|||\cdot|||)$. Since this holds for all $\omega> s(A_{0})$, we conclude 
$$ s(A_{0})=\inf \{ s_n^{0}(A_{0}):||\cdot|| \mbox{ equivalent to }\; ||\cdot||_{X}\}.$$
The standard rotation argument for $\theta\in[0,2\pi)$ finishes the proof.
\end{proof}

\section{Examples}
We now discuss various examples for the numerical spectrum.
The Schur decomposition theorem, see \cite{fz05}, guarantees that any square matrix  transformed by unitary similarity into an upper triangular form. So we start with  upper triangular matrices.

\begin{example}
Let $X:=\mathbb C^2$ be endowed with $||\cdot||_p$, $1\le p\le \infty$ and 
$A:=\begin{pmatrix} 0&1\\0&0\end{pmatrix}\in \mathcal L(X)$.\\

A straightforward calculation using the generated semigroup $\begin{pmatrix} \begin{pmatrix} 1&t\\0&1\end{pmatrix}\end{pmatrix}_{t\ge0}$ shows that   the numerical spectrum of $A$ is a closed disk centered at $0$ with numerical radius 
$$
r_n(A)=\begin{cases}
1\hspace{2cm}\;\;\;\;\;\;\;\; \mbox {for} \; p=1,\infty\\
\big(\frac{p-1}{p}\big)^{\left(1-\frac{1}{p}\right)} \big(\frac{1}{p}\big)^{\frac{1}{p}} \;\;\;\; \mbox{for} \; 1<p<\infty.
\end{cases}
$$

\end{example}

\begin{example}
Let $X:= \mathbb C^{2}$ be endowed with $||\cdot||_1$ and consider $A:=\begin{pmatrix} 1&1\\0& -1\end{pmatrix}$. 
Then the generated semigroup  is given by 
$$ T(t)= e^{tA}= \begin{pmatrix} e^t & \frac{1}{2}(e^t -e^{-t})\\ 0& e^{-t}\end{pmatrix}, t\ge 0.$$
By Proposition \ref{equality3}  for  $\theta=0$ we have
\begin{eqnarray*}
s_n^{0}(A)= \omega_n^{0}(A)&=& \lim_{t\searrow0}\frac{1}{t} \log|| T(t)||\\
&=&\lim _{t\searrow 0} \frac{\log ||T(t)|| -\log||T(0)||}{t}\\
&=& \left. \frac{d}{dt} \log ||T(t)|| \right|_{t=0}=1. \\
\end{eqnarray*}

Then we have 
 $\sigma_n(A)\subseteq \{z\in \mathbb C: \mbox{Re}\; z\le 1 \}$. To determine exactly the location of the numerical spectrum, consider the upper triangular matrix  $A + Id = \begin{pmatrix} 2&1 \\0&0\end{pmatrix} =:B $. From Proposition \ref{equa} one has $\sigma_n(A)= -1+\sigma_n(B)$.\\
For $x:=\binom{(1-s)e^{i\theta}}{s e^{i\varphi}}$ with $||x||_1=1$ and $j(x):= \binom{e^{-i\theta}}{e^{-i\varphi}}$ with $||j(x)||_{\infty}=1$ for $s\in [0,1]$ and $\theta,\varphi \in [0,2\pi)$  one has $||x||_1= ||j(x)||_{\infty}=\langle x,\varphi\rangle =1$ and for $s\in [0,1]$

\begin{eqnarray*}
\langle Bx,j(x)\rangle &= &\big\langle \begin{pmatrix} 2&1 \\0&0\end{pmatrix}\binom{(1-s)e^{i\theta}}{s e^{i\varphi}}, \binom{e^{-i\theta}}{e^{-i\varphi}} \big\rangle \\
&=& \big\langle \binom{2(1-s)e^{i\theta} +s e^{i\varphi}}{0}, \binom{e^{-i\theta}}{e^{-i\varphi}}\big\rangle\\
&=& 2+s(-2+e^{i(\varphi -\theta)}).
\end{eqnarray*}
Then
$\sigma_n(B)=\{2+s(-2+e^{i(\varphi -\theta)}) : s\in[0,1], \theta,\varphi\in [0,2\pi)\}$. Since the set 
$\{-2+e^{i(\varphi -\theta)} : \theta,\varphi\in [0,2\pi)\}$ is a circle with center at $(-2,0)$ and radius $1$,  the multiplication with $s\in [0,1]$ gives us all the lines from $0$ to the circle as in  Figure $(a)$.

Since the numerical spectrum is convex,  $\sigma_n(B)$ is a circular cone with vertex at $0$.
 By shifting  the numerical spectrum of $\sigma_n(B)$ by $1$ to the left, one obtains the numerical spectrum of $A$. 
\end{example}

\begin{figure}[h]
    \begin{center}
        \subfloat[$\sigma_n(B)$ on $(\mathbb C^2, ||\cdot||_1)$]{%
            \label{fig:first}
            \includegraphics[width=0.40\textwidth]{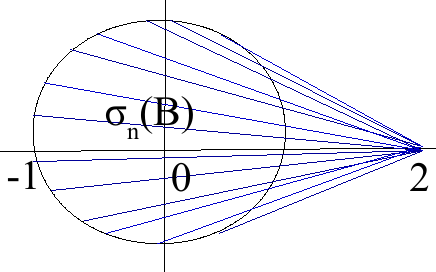}
        }%
\hspace{1cm}
        \subfloat[$\sigma_n( A)$ on $(\mathbb C^2, ||\cdot||_1)$]{%
           \label{fig:second}
           \raisebox{0.2cm}{
\includegraphics[width=0.40\textwidth]{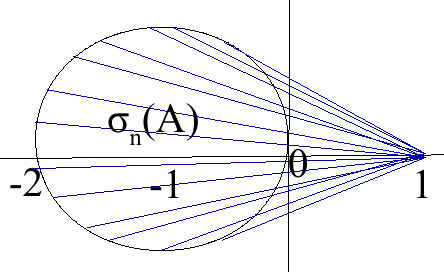}   }
           }
\end{center}
\caption{}
\end{figure}

\subsection{Translation semigroups}

In the next example we see that for unbounded operators A the spectrum $\sigma(A)$ is not  always contained in $ \overline{W(A)}$,  while it is contained in $\sigma_n(A)$.
\begin{example}\label{ex5.3}
Consider $(T(t))_{t\ge0}$ the right translation semigroup of isometries on the Hilbert space  $H= L^2(\mathbb R_{+})$ with generator  $Af=-f^\prime$  on the domain
$$ D(A):=\{ f\in H^1(\mathbb R_{+}) :  f(0)=0\}.$$
Clearly, the operator $A$ is dissipative, and hence $\sigma(A)\subset \mathbb{C}_{-}\footnote[1]{In the following, $\mathbb{C}_{-}$ denotes the closed left half plane, i.e., $\mathbb{C}_{-}:=\{\lambda\in\mathbb{C}:\real{\lambda}\leq 0\}$.}$. 
The left translation semigroup $(S(t))_{t\ge0}$ is generated by the adjoint $A'$ of $A$ where $A' f=f^\prime$ on the domain $D(A')= H^1(\mathbb R_{+})$. 

Since for each $\lambda\in\mathbb{C}$ with $\real\lambda<0$ the function $\epsilon_{\lambda}(t):= e^{\lambda t}$ is an eigenfunction of $A'$ to the eigenvalue $\lambda$ and since $\sigma_p(A')=\sigma_r(A)$, we obtain  that $\sigma(A)=\mathbb{C}_{-}$. 
From \cite[p. 175]{tr05} we know that the numerical range is $W(A)=i\mathbb R$, while it follows from Theorem \ref{union} and from Proposition \ref{spectrum} that  $\sigma_n(A)=\mathbb C_{-}$.

\end{example}
We now present  an example showing that $\sigma_n(A)=\mathbb C$ is possible.
\begin{example} \label{ex1} 
Define the left translation semigroup $(T(t))_{t\ge0}$ with a jump on \\$X:= L^1[-1,+\infty )$ by
\[(T(t)f)(s):=\begin{cases} 2f(s+t) & \text{if } s\in [-t,0],\\
                           f(s+t) & \text{otherwise.} \end{cases}\]
Clearly, $\|T(t)\|\leq 2$ for each $t\geq 0$. 
If $\mathds{1}_{[0,t]}$ denotes the characteristic function of the interval $[0,t]$ for $t>0$, then we have $||T(t)\mathds{1}_{[0,t]}||= 2 ||\mathds{1}_{[0,t]}||$. 
Therefore,  $(T(t))_{t\ge0}$ is a strongly continuous semigroup satisfying 
\begin{equation}\label{boundequal2}
\|T(t)\|=2 \quad \text{ for every }t>0. 
\end{equation}
In particular, the spectrum $\sigma(A)$ of the generator $(A,D(A))$ is contained in the left half plane $\mathbb C_-$, see \cite[Prop.~IV.2.2]{en00}. 
On the other hand, a simple calculation proves that for Re~$\lambda < 0$ the function 
\[f_\lambda(s)=\begin{cases}2 e^{\lambda s} & \text{if} \;\;s\in [-1,0],\\
 e^{\lambda s} &\mbox{otherwise,}\end{cases}\] 
is an eigenfunction of $A$ showing that $\mathbb{C}_{-}\subseteq\sigma(A)$, and hence $\sigma(A)=\mathbb{C}_-$. 

From \eqref{boundequal2} we conclude that there is no $\omega\in \mathbb R$ such that $\|T(t)\|\le e^{t\omega}$  for each $t\ge0$. 

Further, there is no $\theta\in[0,2\pi) $ such that $A_{\theta}$ generates an $\omega$-contractive $C_0$-semigroup for any $\omega\in \mathbb R$:
\begin{itemize}
\item The case $\theta=0$ is excluded as $||T(t)||=2$ for all $t>0$.
\item Let $\theta=\pi$. Since $\mathbb C_{-}\subseteq \sigma(A)$,  we have $\mathbb C_{+}\subseteq \sigma(-A)$, hence $A_{\pi}=-A$ is not the generator of a $C_0$-semigroup.
\item Assume that there exists $\theta\in (0,\pi)\cup (\pi, 2\pi)$  such that $A_{\theta}$ generates an $\omega$-contractive $C_0$-semigroup $(T(t))_{t\ge0}$. Then $\sigma_n(A)\subseteq \{ e^{-i\theta} \lambda: \mbox{Re}\; \lambda\le \omega\}$. However, since $i \mathbb R\subset \sigma(A)\subseteq \sigma_n(A)$, this is not possible. 
\end{itemize}
Hence there exists no $(\theta, \omega)\in [0,2\pi)\times \mathbb R$ such that $A_{\theta}$ generates an $\omega$-contractive semigroup. It follows that $\sigma_n(A)=\mathbb C$.
\end{example}

\subsection{Multiplication operators}

We show that the numerical spectrum of multiplication operators induced by a function $q$ coincides with the closed convex hull of the spectrum.

\begin{prop} \label{pro5}
Let $\Omega$ be a locally compact space, $X:=C_0(\Omega)$, and $q:\Omega\rightarrow\mathbb{C}$ a continuous function. 
For the multiplication operator $(M_q,D(M_q))$ with 
\begin{align*}
D( M_q) &:=\{ f\in X: q\cdot f\in X\},\\
M_q f&: =q\cdot f \;\; \mbox{for all } \; f\in D(M_q),
\end{align*}
we have 
$$\sigma_n(M_q)= \overline{co}\{ q(\Omega)\}.$$
\end{prop}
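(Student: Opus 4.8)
The plan is to establish both inclusions directly, exploiting the fact that the spectrum of the multiplication operator $M_q$ is well understood (namely $\sigma(M_q) = \overline{q(\Omega)}$) and that $\sigma_n$ is always closed and convex. I first recall the elementary fact that $R(\lambda, M_q) = M_{1/(q-\lambda)}$ whenever $\lambda \notin \overline{q(\Omega)}$, and that $\|R(\lambda, M_q)\| = \sup_{s\in\Omega} |q(s) - \lambda|^{-1} = 1/d(\lambda, \overline{q(\Omega)})$. This identity is the workhorse for both directions.

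For the inclusion $\overline{co}\{q(\Omega)\} \subseteq \sigma_n(M_q)$: since $q(\Omega) \subseteq \overline{q(\Omega)} = \sigma(M_q) \subseteq \sigma_n(M_q)$ by Proposition \ref{spectrum}, and $\sigma_n(M_q)$ is closed and convex by Proposition \ref{equa}(1), the closed convex hull of $q(\Omega)$ is contained in $\sigma_n(M_q)$. This direction is essentially free.

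For the converse $\sigma_n(M_q) \subseteq \overline{co}\{q(\Omega)\}$: I take $\lambda \notin \overline{co}\{q(\Omega)\}$ and produce an open half plane $H_{\theta,\omega}$ witnessing $\lambda \in \rho_n(M_q)$ in the sense of Definition \ref{7de}. By the Hahn–Banach separation theorem there is a closed half plane $\mathbb{C}\setminus H_{\theta,\omega}$ containing $\overline{co}\{q(\Omega)\}$ with $\lambda \in H_{\theta,\omega}$; so $H_{\theta,\omega} \cap \overline{q(\Omega)} = \emptyset$, hence $H_{\theta,\omega} \subseteq \rho(M_q)$. It remains to verify the resolvent estimate $\|R(\mu, M_q)\| \le 1/d(\mu, \partial H_{\theta,\omega})$ for every $\mu \in H_{\theta,\omega}$. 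Using the norm identity above, $\|R(\mu, M_q)\| = 1/d(\mu, \overline{q(\Omega)})$, so I must check $d(\mu, \overline{q(\Omega)}) \ge d(\mu, \partial H_{\theta,\omega})$; this holds precisely because $\overline{q(\Omega)} \subseteq \mathbb{C}\setminus H_{\theta,\omega}$, so the nearest point of $\overline{q(\Omega)}$ to $\mu \in H_{\theta,\omega}$ is at least as far as the nearest boundary point of the half plane. Thus $\lambda \in H_{\theta,\omega} \subseteq \rho_n(M_q)$, giving $\lambda \notin \sigma_n(M_q)$.

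The main (and only mildly technical) obstacle is confirming the resolvent norm formula $\|R(\lambda, M_q)\| = 1/d(\lambda,\overline{q(\Omega)})$ on $C_0(\Omega)$ — one must check that $1/(q-\lambda)$ is a bounded continuous multiplier and that the supremum of its modulus equals the reciprocal distance, which is a routine fact about multiplication operators on $C_0(\Omega)$; alternatively one can invoke Theorem \ref{union} together with a description of $W(M_q)$ and $\sigma_r(M_q)$, but the direct resolvent argument above is cleaner. Everything else is an immediate consequence of convexity of $\sigma_n$ and the separation theorem.
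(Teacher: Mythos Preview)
Your proposal is correct and follows essentially the same route as the paper: both directions use that $\sigma(M_q)=\overline{q(\Omega)}\subseteq\sigma_n(M_q)$ together with closedness and convexity of $\sigma_n$ for one inclusion, and for the other separate $\lambda$ from $\overline{co}\{q(\Omega)\}$ by a half plane and verify the resolvent estimate via $\|R(\mu,M_q)\|=1/d(\mu,\overline{q(\Omega)})\le 1/d(\mu,\partial H_{\theta,\omega})$. Your handling of the open/closed half-plane bookkeeping is in fact slightly cleaner than the paper's.
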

\begin{proof} $\supseteq $: Since by \cite[Prop.~I.4.2]{en00} and Proposition~\ref{kompakt} $\overline{q( \Omega)}= \sigma(M_q)\subseteq \sigma_n(M_q)$ and since $\sigma_n(M_q)$ is convex, we obtain $$\overline{co}\{ q(\Omega)\}\subseteq \sigma_n(M_q).$$
$\subseteq$:  Take $\lambda \in \mathbb C\backslash \overline{co}\{ q(\Omega)\}$. Then there exists a closed half plane  $  H_{\theta,\omega} \supseteq  \overline{co}\{ q(\Omega) \} $ such that $\lambda\notin H_{\theta,\omega}$. It is clear that $d(\lambda,  \overline{q(\Omega)})\ge d(\lambda, \partial H_{\theta,\omega})$, and for $z\notin H_{\theta,\omega}$ one has
\begin{eqnarray*}
\|R(z,M_q)\|&=&\| (z-M_q)^{-1}\|= \|M_{\frac{1}{z-q}}\|= \sup_{s\in \Omega} \left|\frac{1}{z-q(s)}\right| \\
&=& \frac{1}{d(z, q(\Omega))}= \frac{1}{d(z,\overline{q(\Omega)})}\le \frac{1}{d(z, \partial H_{\theta,\omega})}. 
\end{eqnarray*}
Thus, for all $z\in \mathbb C\backslash H_{\theta,\omega} \subset \rho(M_q)$ one has $||R(z, M_q)||\le \frac{1}{d(z, \partial H_{\theta,\omega})}$. \\
Hence $\lambda \in \rho_n(M_q)$.
\end{proof}
An analogous  result holds for multiplication operators on $L^p(\Omega, \mu)$ for $1\le p\le \infty$.

\begin{prop}
Let $(\Omega, \Sigma, \mu)$ be a $\sigma$-finite measure space, $X:= L^p( \Omega, \mu), 1\le p\le\infty$, and $q: \Omega \rightarrow \mathbb C$ a measurable function. 
For the multiplication operator $(M_q,D(M_q))$ with 
\begin{align*}
D(M_{q})&:=\{f\in L^p(\Omega, \mu): q\cdot f \;\in L^p(\Omega, \mu)\},\\
M_{q} f&:= q\cdot f \;\; \mbox{for all} \; f\in D(M_q),
\end{align*}
we have 
$$\sigma_n(M_q)= \overline{co}\{q_{ess}(\Omega)\}$$
where $$q_{ess}(\Omega):= \{\lambda \in \mathbb C: \mu\big (  \{s\in \Omega : |q(s)-\lambda|< \epsilon\}\big)\not= 0 \,\mbox{for all}\; \epsilon >0\}$$
is the essential range of $q$.
\end{prop}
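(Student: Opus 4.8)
The plan is to mimic the proof of Proposition~\ref{pro5} almost verbatim, replacing the range $q(\Omega)$ by the essential range $q_{ess}(\Omega)$ and the sup-norm estimates by their essential-sup counterparts. First I would recall the standard facts (cf.\ \cite[Prop.~I.4.2]{en00}) that $(M_q,D(M_q))$ is closed, densely defined on $L^p(\Omega,\mu)$ for $1\le p\le\infty$, that $\sigma(M_q)=\overline{q_{ess}(\Omega)}$ (note $q_{ess}(\Omega)$ is already closed), and that for $z\notin q_{ess}(\Omega)$ one has $R(z,M_q)=M_{1/(z-q)}$ with
$$\|R(z,M_q)\| = \operatorname*{ess\,sup}_{s\in\Omega}\left|\frac{1}{z-q(s)}\right| = \frac{1}{d(z,\,q_{ess}(\Omega))}.$$

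For the inclusion $\overline{co}\{q_{ess}(\Omega)\}\subseteq\sigma_n(M_q)$ I would argue exactly as in Proposition~\ref{pro5}: since $\sigma(M_q)=q_{ess}(\Omega)\subseteq\sigma_n(M_q)$ by Proposition~\ref{spectrum}(1), and since $\sigma_n(M_q)$ is convex and closed by Proposition~\ref{equa}(1), taking the closed convex hull preserves the inclusion. For the reverse inclusion, take $\lambda\notin\overline{co}\{q_{ess}(\Omega)\}$; by the separation theorem there is a closed half plane $H_{\theta,\omega}\supseteq\overline{co}\{q_{ess}(\Omega)\}$ with $\lambda\notin H_{\theta,\omega}$. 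Then $\mathbb C\setminus H_{\theta,\omega}\subseteq\rho(M_q)$, and for $z\notin H_{\theta,\omega}$ one has $d(z,q_{ess}(\Omega))\ge d(z,\partial H_{\theta,\omega})$, so the resolvent identity above gives $\|R(z,M_q)\|=1/d(z,q_{ess}(\Omega))\le 1/d(z,\partial H_{\theta,\omega})$. By Definition~\ref{7de} this places the open half plane $\mathbb C\setminus H_{\theta,\omega}$ (hence $\lambda$) in $\rho_n(M_q)$, which is what we want.

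The one genuinely new technical point compared to $C_0(\Omega)$ is justifying the formula $\|M_{1/(z-q)}\|=\operatorname*{ess\,sup}|1/(z-q)|=1/d(z,q_{ess}(\Omega))$ on $L^p$ and the fact that $\rho(M_q)=\mathbb C\setminus q_{ess}(\Omega)$; both rest on the observation that $M_h\in\mathcal L(L^p(\Omega,\mu))$ with $\|M_h\|=\|h\|_\infty$ (the $\mu$-essential sup norm) for $h\in L^\infty$, which holds for all $1\le p\le\infty$ on a $\sigma$-finite space, and that for $h=1/(z-q)$ this essential sup equals $1/d(z,q_{ess}(\Omega))$ precisely because $q_{ess}(\Omega)$ is by definition the set of $\lambda$ every neighbourhood of which is hit by $q$ on a set of positive measure. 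I would simply cite \cite[Prop.~I.4.2]{en00} (or its obvious $L^p$-analogue) for these, so the proof is short; the main ``obstacle'' is purely bookkeeping — making sure the $p=\infty$ case and $\sigma$-finiteness are not silently used where they matter. No deeper idea is needed, as the semigroup-theoretic content has already been packaged into Definition~\ref{7de}, Proposition~\ref{equa}, and Proposition~\ref{spectrum}.
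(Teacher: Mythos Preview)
Your proposal is correct and follows essentially the same route as the paper: both directions rest on $\sigma(M_q)=q_{ess}(\Omega)$ together with the convexity/closedness of $\sigma_n$, and on separating $\lambda$ from $\overline{co}\,q_{ess}(\Omega)$ by a half plane and then bounding $\|R(z,M_q)\|\le 1/d(z,\partial H_{\theta,\omega})$. The only cosmetic difference is that the paper writes out the $L^p$-integral $\|R(\lambda,M_q)f\|^p=\int_\Omega|f/(\lambda-q)|^p$ explicitly to obtain the bound, whereas you invoke the uniform formula $\|M_h\|=\|h\|_\infty$; your version has the slight advantage of covering $p=\infty$ without a separate argument, but the underlying idea is identical.
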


\begin{proof}
$\supseteq $ : We know that $\sigma(M_q)= q_{ess}( \Omega)$,  see \cite[Proposition I.4.10]{en00}. By Proposition \ref{spectrum} and Proposition \ref{equa} we have  
$\overline{co}\{q_{ess}(\Omega)\}\subseteq \sigma_n(M_q)$.

$\subseteq$:  Take $\lambda \in \mathbb C\backslash  \overline{co}\{q_{ess}(\Omega)\}$. Then there exists an open half plane $ H_{\theta, \omega}$ such that \\$  \overline{co}\{q_{ess}(\Omega)\} \subseteq \mathbb C \backslash H_{\theta, \omega}$ and  $e^{-i\theta} H_{\theta, \omega}= \mathbb C_\omega$. 
It follows that $\sigma(M_q^{\theta})\subseteq \{\lambda\in \mathbb C: \mbox{Re} \lambda \le \omega\}$. 
Take $f\in  L^p(\Omega, \mu)$ and $\lambda \in H_{\theta,\omega}$. Then 
\begin{eqnarray*}
\|R(e^{-i\theta}\lambda,M_q^{\theta})f\|^p&=&\|R(\lambda,M_q)f\|^p= \left\|\frac{1}{\lambda-q}f\right\|^p\\
&=&\int_{\Omega}\left |\frac{f(s)}{\lambda-q(s)}\right|^p ds.\\
&\le& \frac{1}{(d(\lambda,q(\Omega)))^p}\|f\|^p\le \frac{1}{(d(\lambda,\partial H_{\theta,\omega}))^p}\|f\|^p.\\
\end{eqnarray*}
Thus $$\|R( \lambda ,M_q)\|\le \frac{1}{d(\lambda,\partial H_{\theta,\omega})}\;\; \forall \lambda\in H_{\theta,\omega},$$ 
and hence  $\lambda \in \rho_n(M_q)$.
\end{proof}
\subsection{Laplace operator}
\begin{example} 
Consider the Laplace operator $\triangle f=f^{\prime \prime}$ on the  Hilbert space $\mathbb H:= L^{2}(0,2\pi)$   with domain $$D(\triangle):=\{f\in  H^2(0,2\pi) : f(0)=f(2\pi)=0\}.$$
It follows from \cite[Ex.~4.6.1]{da95} that the spectrum consists only of eigenvalues given by $\sigma(\triangle)=\{-n^2: n\in\mathbb{N}, n\geq 1\}$. 
The eigenfunction corresponding to the eigenvalue $-n^2$ is given by $f_n(x)=\sin (n  x )$. 
Since $\triangle$ is a self-adjoint operator, by the spectral theorem there exists a $\sigma$-finite measure space $(\Omega,\Sigma,\mu)$ and a function $q: \Omega\rightarrow\mathbb{R}$ such that $\triangle$ is unitarily equivalent to a multiplication operator $M_q$. 
Since $\sigma(\triangle)=\sigma(M_q)=q_{ess}(\Omega)$ by  \cite[Prop. I.4.10]{en00} and  $\overline{co}\{q_{ess}(\Omega)\}= \sigma_n(M_q)$ by the previous proposition, we conclude that  $$\sigma_n(\triangle)=\sigma_n(M_q) =(-\infty, -1].$$

\end{example}
\subsection{More examples} 
We  now investigate the description of $\sigma_n(A)$ in Theorem \ref{union} more closely.
\begin{enumerate}
\item If $\sigma_n(A)=\mathbb C$, then $\sigma_r(A)$ is needed to obtain $\sigma_n(A)$ in general.
We give an example where  
 $\overline{co}\{\langle Ax,j(x)\rangle: x\in D(A), ||x||=1, j(x)\in \mathfrak J(x)\}$ is   a strip, while $\sigma_n(A)=\mathbb C$.
\begin{example}
Consider the Hilbert space $H:= L^2(\mathbb R_{+})\oplus L^2(\mathbb R_+)$ and define the generator $(A,D(A))$ by  $A:=B+C$ and $D(A):= D(B)\oplus D(B)$, where 
$ Bf:= -f^\prime $,  $Cf:= -(Id+B) f$ 
and $$D(B):=\{f\in H^1(\mathbb R_+): f(0)=0\}.$$
From Example \ref{ex5.3} we know that 
$\sigma_r(B)=\{\lambda \in \mathbb C: \mbox{Re}\; \lambda <0 \}$ and \\
$$\{\langle Bf, f\rangle: f\in D(B), ||f||=1 \} = i \mathbb R.$$ Further,  $\sigma_r(C)=\{\lambda\in \mathbb C: Re \lambda > -1\}$ and $\{\langle C f, f\rangle: f\in D(C), ||f||=1\} = -Id+i\mathbb R$. 
Thus $ \overline{co}\{\langle Af,f\rangle: f\in D(A), ||f||=1\}$ is the strip between $-Id+i\mathbb R$ and $i\mathbb R$, while the numerical spectrum is $\sigma_n(A)=\mathbb C$.
\end{example}
\end{enumerate}

In the following assume $\sigma_n(A)\not=\mathbb C$. 

\begin{enumerate}
\item[(2)] If $ \overline{co}\{\langle Ax,j(x)\rangle: x\in D(A), ||x||=1, j(x)\in \mathfrak J(x)\}$ is contained in a strip, then $\sigma_r(A)$ is needed  in general, see Example \ref{ex5.3}.
\item[(3)] Assume that $M:=\overline{co}\{\langle Ax,j(x)\rangle: x\in D(A), ||x||=1, j(x)\in \mathfrak J(x)\}$ cannot be enclosed by a strip. Then $\overline{co}\{\langle Ax,j(x)\rangle: x\in D(A), ||x||=1, j(x)\in \mathfrak J(x)\}$ coincides with $\sigma_n(A)$, and hence $\sigma_r(A)$ is not needed to compute $\sigma_n(A)$.
\begin{proof} 
For $\lambda\notin M$ there exists a closed half plane $H$ such that $M\subset H$ and $\lambda\notin H$. 
Assume that $\lambda \in \sigma_r(A)$. By \cite[Prop. II.3.14.(ii)]{en00} we obtain 
\begin{align}
	\mathbb{C}\backslash H \subset \sigma_r(A). \label{halfplaneResSpectrum}
\end{align}
Consider the family $(H_i)_{i\in I}$ of closed half planes containing $M$. 
For each $i, j\in I$ we have 
\begin{align*}
	(\mathbb C \backslash H_i) \cap (\mathbb C\backslash H_j)\neq \emptyset,  
\end{align*}
since $\overline{co}M$ cannot be enclosed by a strip. 
In particular, 
\[(\mathbb{C}\backslash H)\cap(\mathbb{C}\backslash H_i) \neq \emptyset \quad\text{ for all } i\in I, \]
and hence using \eqref{halfplaneResSpectrum} 
\[(\mathbb{C}\backslash H_i)\cap \sigma_r(A)\neq\emptyset\quad\text{ for all }i\in I.\]
Again by \cite[Prop. II.3.14.(ii)]{en00} we obtain that 
\[\mathbb{C}\backslash H_i\subseteq \sigma_r(A) \quad\text{ for all }i \in I\]
leading to $\mathbb{C}\backslash M \subseteq \sigma_r(A)$. 
Hence, $\sigma_n(A)=\mathbb{C}$ contradicting our assumption $\sigma_n(A)\neq\emptyset$. 
\end{proof}
\item[(4)] Let $\overline{co}\{\langle Ax,j(x)\rangle: x\in D(A), ||x||=1, j(x)\in \mathfrak J(x)\}$ be contained in a half strip  described by three half planes $H_1$, $H_2$ and $H_3$. A half plane argument as in $(3)$ implies $\sigma_n(A) = \overline{co}\{\langle Ax,j(x)\rangle: x\in D(A), ||x||=1, j(x)\in \mathfrak J(x)\}$. 

\end{enumerate}

\begin{figure}[h]
    \begin{center}
           \includegraphics[width=0.30\textwidth]{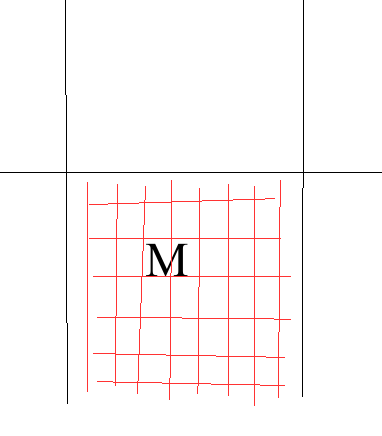}

\end{center}

\caption{}
\end{figure}
Finally we ask the following open question:  \\Is there an operator $(A, D(A))$ such that $\overline{co}\{\langle Ax,j(x)\rangle: x\in D(A), ||x||=1, j(x)\in \mathfrak J(x)\} $ is (contained in) a sector  $ \sum_{\delta}^{\theta}+z$ and  $\sigma_n(A)= \mathbb C$?

\vspace{0.5cm}

{\bf \Large Acknowledgments}\\

The  authors thank Prof.~Rainer Nagel for many valuable discussions  and remarks.

\end{document}